\numberwithin{equation}{section}
\newtheorem{Theorem}{Theorem}[section]
\newtheorem{Corollary}[Theorem]{Corollary}
\newtheorem{Lemma}[Theorem]{Lemma}
\newtheorem{Proposition}[Theorem]{Proposition}
\theoremstyle{Definition}
\newtheorem{definition}[Theorem]{Definition}
\theoremstyle{remark}
\newtheorem{Remark}[Theorem]{Remark}
\newtheorem*{Example}{Example}
\numberwithin{equation}{section}
\titleformat{\section}{\large\bfseries}{}{0pt}{}
\long\def\@makefntext#1{\noindent #1}
\newskip\tabcentering \tabcentering=1000pt plus 1000pt minus 1000pt
\def\MCH#1#2{\setbox0=\hbox{\raise#1\hbox{#2}}\smash{\box0}}% move char
\def\@evenfoot{}\def\@oddfoot{}
\def\@evenhead{\hbox to\textwidth{\small\rm\thepage \hfill
{\it Yueyue FENG, Qi LIU, Yuxin WANG and Jinyu XIA}}} % authors name (the given name is before the surname, and use "and" to separate two authors)%
\def\@oddhead{\hbox to \textwidth{\small{\it
The skew generalized Von Neumann-Jordan type  constant in Banach spaces
} \hfill\thepage}}   % An abbreviated title not exceeding 45 characters (including spaces) for the running head
\def\bc{\begin{center}}
\def\ec{\end{center}}
\def\no{\noindent}
\def\hang{\hangindent\parindent}
\def\textindent#1{\indent\llap{\qquad #1\ \ \enspace}\ignorespaces}
\def\ref{\par\hang\textindent}
\begin{document}

%\linenumbers

\abovedisplayskip=6pt plus 1pt minus 1pt \belowdisplayskip=6pt
plus 1pt minus 1pt
%-------------------  First Head  -----------------------------------------
\thispagestyle{empty} \vspace*{-1.0truecm} \noindent

%===================Text=============================================
\vskip 10mm \bc{\Large\bf The skew generalized Von Neumann-Jordan type  constant in Banach spaces    %Text title%
\footnotetext{\footnotesize
%Received ; Accepted \\ % The received date and accepted date of the article (the author does not need to fill out )%
Supported by Anhui Province Higher Education Science Research Project(Natural Science), 2023AH050487.\\
* Corresponding author\\  % indicate the corresponding author in case of multiple authors, and it is not necessary to do so in case of only one author
 E-mail address:
} } \ec  % fund project and author's email address %

\vskip 5mm
\bc{\bf Yuxin WANG$^1$,\ \ \ Qi LIU$^{1,*}$,\ \ \ Yueyue FENG$^1$,\ \ \ Jinyu XIA$^1$,\ \ \ Muhammad SARFRAZ$^2$}\\  %Author's name (the given name is before the surname),use comma to separate multiple authors and add * after the corresponding author,as Si LI here%
{\small\it $1$. School of Mathematics and physics, Anqing Normal University,  Anqing $246133$, P. R. China\\$2$.School of Mathematics and System Science,	Xinjiang University, Urumqi 830046, P.R.China
}\ec   % affiliation of the authors %
\vskip 1 mm

{\narrower\noindent{\small {\small\bf Abstract}\ \ 
% abstract %
Recently, the Von Neumann-Jordan type constants $C_{-\infty}(X)$ has defined by Takahashi. A new skew generalized constant $C^p_{-\infty}(\lambda,\mu,X)$ based on  $C_{-\infty}(X)$ constant is given in this paper,. First, we will obtain some basic properties of this new constant. Moreover, some relations between this new constant and other constants are investigated. Specially, with the Banach-Mazur distance, we use this new constant to study isomorphic Banach spaces. Ultimately, by leveraging the connection between the newly introduced constant and the weak orthogonality coefficient $\omega(X)$, a sufficient condition for normal structure is established.

\vspace{1mm}\baselineskip 12pt

\no{\small\bf Keywords} \ \ Banach spaces; geometric constants;uniformly non-square; Banach-Mazur distance; normal structure coefficient% Key words are separated by semicolons%

\par

\vspace{2mm}

\no{\small\bf MR(2020) Subject Classification\ \ {\rm 46B20}} %American Classification Number (more than two numbers are separated by a semicolon without any punctuation at the end)%

}}

\baselineskip 15pt

\section{1. Introduction}

In recent times, there has been a surge in research on various geometric constants for Banach spaces, with particular emphasis on the constant \( C_{\mathrm{NJ}}(X) \) named von Neumann-Jordan constant and the \( J(X) \) constant named James constant. Notable contributions to this field have been made by Gao \cite{01,02}, Yang and Wang \cite{03}, and Kato, Maligranda, and Takahashi \cite{04,05}. For more detailed information, readers are encouraged to consult \cite{06,07,08,09} and the references contained therein.

The paper is organized as follows:   

In the next section we just recall a few basic definitions and related properties.  

In Section 3 some equivalent forms of $C^p_{-\infty}(\lambda,\mu,X)$ are considered. Specially, the new constant can be employed to establish a connection between a Banach space \( X \) and its dual space \( X^* \). Then we obtain some relations between this new constant and other well-known constants, including the  $C^p_{NJ}(\lambda,\mu,X)$, $C_{NJ}(X)$ and $J(X)$ constants. Specially, we will give an equivalent relationship among the constants of $J(X),~C_{NJ}(X),~C_{-\infty}^p(\lambda, \mu, X)$ and an inequality connection linking the $C_{-\infty}^p(\lambda, \mu, X)$ constant to the $J(X)$ constant. Based on this inequality, as two corollaries we also obtain the relations with the $C_{-\infty}^p(\lambda, \mu, X)$ constant and uniformly non-square.

In Section 4, we utilize the Banach–Mazur distance $d(X,Y)$ to compare the constant $C^p_{-\infty}(\lambda,\mu,X)$ between space $X$ and its isomorphic counterpart $Y$.

Finally, in the last Section 5, we  establish a new sufficient condition for normal structure of Banach spaces in terms of $C^p_{-\infty}(\lambda,\mu,X)$.

\section{2. Notations and Preliminaries}
Throughout the paper, we consider a real Banach space $X$ with $\operatorname{dim}X \geq 2$. The unit ball of $X$ is denoted by $B_X$, and its unit sphere by $S_X$.

We recall the definitions of some geometric constants.
\begin{definition}
For a Banach space $X$, if there exists a $\delta \in (0, 1)$ such that for any $x_1, x_2 \in S_X$,   at least one of the following inequalities holds:  
\[
\frac{\|x_1 + x_2\|}{2} \leq 1 - \delta \quad \text{or} \quad \frac{\|x_1 - x_2\|}{2} \leq 1 - \delta,
\]   
then $X$ is said to be uniformly non-square. 
	
	Related to this property is the James (or non-square ) constant of $X$, denoted by  
	\[
	J(X) = \sup \{\min\{\|x_1 + x_2\|, \|x_1 - x_2\|\} : x_1, x_2 \in S_X\}.
	\]  
	
	Importantly, a Banach space $X$ is uniformly non-square if and only if the James constant satisfies $J(X) < 2$.
\end{definition}
In \cite{16}, Clarkson presented the Von Neumann-Jordan constant as the smallest constant $C$ for which $$\frac{1}{C}\leq\frac{\|x_1+x_2\|^2+\|x_1-x_2\|^2}{2\|x_1\|^2+2\|x_2\|^2}\leq C,$$where $(x_1,x_2)\neq(0,0)$.

Furthermore, regarding the Von Neumann-Jordan constant, an equivalent definition is as follows:$$C_{NJ}(X)=\sup\left\{\frac{\|x_1+x_2\|^2+\|x_1-x_2\|^2}{2\|x_1\|^2+2\|x_2\|^2}:x_1,x_2\in X, (x_1,x_2)\neq(0,0)\right\}.$$

Later, through the study of the Von Neumann-Jordan constant, Takahashi defined the von Neumann-Jordan type constants as follows:$$C_{-\infty}(X)=\sup\left\{\frac{\min\{\|x_1+x_2\|^2,\|x_1-x_2\|^2\}}{2\|x_1\|^2+2\|x_2\|^2}:x_1,x_2\in X, (x_1,x_2)\neq(0,0)\right\}.$$

Subsequently, scholars generalized the above two kinds of constants into the following forms:

In \cite{17}, Liu et al. introduced the generalized Von Neumann-Jordan constant: For $\lambda,~\mu>0$,$$L_{YJ}(\lambda,\mu,X)=\sup\left\{\frac{\|\lambda x_1+\mu x_2\|^{2}+\|\mu x_1-\lambda x_2\|^{2}}{(\lambda^2+\mu^2)(\|x_1\|^{2}+\|x_2\|^{2})}:x_1,x_2\in X,(x_1,x_2)\neq(0,0)\right\}.$$

In \cite{18}, Ni et al. introduced the skew generalized Von Neumann-Jordan constant: For $\lambda,~\mu>0$ and $1\leq p<+\infty$, $$C_{NJ}^{p}(\lambda,\mu,X)=\sup\left\{\frac{\|\lambda x_1+\mu x_2\|^{p}+\|\mu x_1-\lambda x_2\|^{p}}{2^{p-2}(\lambda^{p}+\mu^{p})(\|x_1\|^{p}+\|x_2\|^{p})}:x_1,x_2\in X,(x_1,x_2)\neq(0,0)\right\}.$$

In \cite{15}, Zuo introduced the $C^p_{-\infty}(X)$ constant:$$C^p_{-\infty}(X)=\sup\left\{\frac{\min\{\|x_1+x_2\|^p,\|x_1-x_2\|^p\}}{2^{p-2}\|x_1\|^p+2^{p-2}\|x_2\|^2}:x_1,x_2\in X, (x_1,x_2)\neq(0,0)\right\}.$$

\section{3. The $C_{-\infty}^p(\lambda, \mu, X)$ constant}
Based on the above $C_{-\infty}(X)$ and $C^p_{-\infty}(X)$ constant, we define the following skew generalized $C_{-\infty}^p(\lambda, \mu, X)$  constant:
	\begin{definition}
			Let $X$ be a Banach space, for $\lambda>0,\mu>0$ and $1\leq p<+\infty$, the constant $C_{-\infty}^p(\lambda, \mu, X)$ is defined as
		$$C_{-\infty}^p(\lambda, \mu, X) = \sup\left\{\frac{\min\{\| \lambda x_1 + \mu x_2 \|^p , \| \mu x_1 - \lambda x_2 \|^p\}}{2^{p-3} (\lambda^p + \mu^p) (\| x_1 \|^p + \| x_2 \|^p)} : \, x_1, x_2 \in X,(x_1,x_2)\neq(0,0)\right\}.$$
	\end{definition}
Clearly the $C^p_{-\infty}(\lambda,\mu,X)$ constant also can be rewritten as the following form: For $\lambda>0,\mu>0$ and $1\leq p<+\infty$,
$$C^p_{-\infty}{(\lambda,\mu, X)}=\sup \bigg\{\frac{\min\{\Vert \lambda x_1+\mu x_2\Vert^p,\Vert \mu x_1-\lambda x_2\Vert^p\}}{2^{p-3} (\lambda^p + \mu^p) (\| x_1 \|^p + \| x_2 \|^p)}:  x_1,x_2\in X, \Vert x_1\Vert=1, \Vert x_2\Vert\leq 1 \bigg\}.$$
or equivalently, for $\lambda>0,\mu>0$ and $1\leq p<+\infty$,
$$C^p_{-\infty}{(\lambda,\mu, X)}=\sup \bigg\{\frac{\min\{\Vert \lambda x+\mu ty\Vert^p,\Vert \mu x-\lambda ty\Vert^p\}}{2^{p-3} (\lambda^p + \mu^p) (1+ t^p)}: x_1,x_2\in S_X,
0\leq t\leq 1\bigg\}.$$

Moreover, the following proposition establishes another form of $C^p_{-\infty}(\lambda,\mu, X)$:

\begin{Proposition}
	Let $X$ be a Banach space,  for $\lambda>0,\mu>0$ and $1\leq p<+\infty$, $$C_{-\infty}^p(\lambda, \mu, X) = \sup\left\{\frac{\min\{\| \lambda x_1 + \mu x_2 \|^p , \| \mu x_1 - \lambda x_2 \|^p\}}{2^{p-2} (\lambda^p + \mu^p)} :\|x_1\|^p+\|x_2\|^p=2  \right\}.$$
\end{Proposition}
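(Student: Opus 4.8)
The plan is to show that the supremum in the definition of $C_{-\infty}^p(\lambda,\mu,X)$, which ranges over all nonzero pairs $(x_1,x_2)$, is already attained (as a supremum) when restricted to the sphere $\{(x_1,x_2):\|x_1\|^p+\|x_2\|^p=2\}$; on that sphere the denominator $2^{p-3}(\lambda^p+\mu^p)(\|x_1\|^p+\|x_2\|^p)$ collapses to $2^{p-2}(\lambda^p+\mu^p)$, which gives exactly the claimed formula. The key observation is homogeneity: both the numerator $\min\{\|\lambda x_1+\mu x_2\|^p,\|\mu x_1-\lambda x_2\|^p\}$ and the denominator $2^{p-3}(\lambda^p+\mu^p)(\|x_1\|^p+\|x_2\|^p)$ are positively homogeneous of degree $p$ in the pair $(x_1,x_2)$, so the whole quotient is invariant under scaling $(x_1,x_2)\mapsto(\alpha x_1,\alpha x_2)$ for $\alpha>0$.

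First I would denote by $A$ the supremum defining $C_{-\infty}^p(\lambda,\mu,X)$ and by $B$ the supremum on the right-hand side of the proposition. For the inequality $B\le A$: every pair with $\|x_1\|^p+\|x_2\|^p=2$ is in particular a nonzero pair, and for such a pair the quotient appearing in $A$ equals the quotient appearing in $B$, so $B\le A$ is immediate. For the reverse inequality $A\le B$: take an arbitrary nonzero pair $(x_1,x_2)$, set $\alpha=\bigl(2/(\|x_1\|^p+\|x_2\|^p)\bigr)^{1/p}>0$, and replace $(x_1,x_2)$ by $(\alpha x_1,\alpha x_2)$. By homogeneity the value of the quotient in $A$ is unchanged, and the rescaled pair satisfies $\|\alpha x_1\|^p+\|\alpha x_2\|^p=2$, so it is admissible for $B$; hence the quotient is $\le B$. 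Taking the supremum over all nonzero pairs gives $A\le B$.

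Combining the two inequalities yields $A=B$, which is precisely the stated identity. The argument is essentially a routine normalization, so there is no real obstacle; the only mild point to be careful about is that for a nonzero pair $(x_1,x_2)$ the denominator $\|x_1\|^p+\|x_2\|^p$ is strictly positive, so the rescaling factor $\alpha$ is well defined and positive, and that the map $(x_1,x_2)\mapsto(\alpha x_1,\alpha x_2)$ genuinely preserves the quotient because each of $\|\lambda x_1+\mu x_2\|$, $\|\mu x_1-\lambda x_2\|$, $\|x_1\|$, $\|x_2\|$ scales by the factor $\alpha$ and all exponents in sight are $p$. One could also note this matches the earlier "$\|x_1\|=1,\ \|x_2\|\le1$'' reformulation, which is the same normalization performed differently, but a direct proof via scaling is cleanest.
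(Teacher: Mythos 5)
Your proof is correct and rests on exactly the same idea as the paper's: both the numerator and the denominator are positively homogeneous of degree $p$, so one may rescale any nonzero pair onto the set $\|x_1\|^p+\|x_2\|^p=2$ without changing the quotient. The only cosmetic difference is that you rescale an arbitrary nonzero pair directly, whereas the paper first passes through the parametrization $x_1,x_2\in S_X$, $0\le t\le 1$ and rescales $(x_1,tx_2)$; the substance is identical.
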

\begin{proof}
	Let $$K=\sup\left\{\frac{\min\{\| \lambda x_1 + \mu x_2 \|^p , \| \mu x_1 - \lambda x_2 \|^p\}}{2^{p-2} (\lambda^p + \mu^p)} :\|x_1\|^p+\|x_2\|^p=2\right\}.$$ Then clearly, $C_{-\infty}^p(\lambda, \mu, X) \geq K$.
	
	Now we prove that $C_{-\infty}^p(\lambda, \mu, X) \leq K$, let $x_1,x_2 \in S_{X}$, and let $0 \leq t \leq 1$, put 
	$$\alpha=\frac{2^{\frac{1}{p}}x_1}{(1+t^p)^{\frac{1}{p}}},\beta=\frac{2^{\frac{1}{p}}tx_2}{(1+t^p)^{\frac{1}{p}}}.$$\\
	Then $\|\alpha\|^p+\|\beta\|^p=2$, and we obtain that\\
	$$\frac{\min\{\| \lambda x_1 + \mu tx_2 \|^p , \| \mu x_1 - \lambda tx_2 \|^p\}}{2^{p-3} (\lambda^p + \mu^p) (1+t^p)}=\frac{\min\{\| \lambda\alpha + \mu\beta \|^p , \| \mu\alpha - \lambda\beta \|^p\}}{2^{p-2} (\lambda^p + \mu^p)} \leq K,$$\\
	which implies that $C_{-\infty}^p(\lambda, \mu, X) \leq K$.
	
	Hence,	$$C_{-\infty}^p(\lambda, \mu, X) = \sup\left\{\frac{\min\{\| \lambda x_1 + \mu x_2 \|^p , \| \mu x_1 - \lambda x_2 \|^p\}}{2^{p-2} (\lambda^p + \mu^p)} :\|x_1\|^p+\|x_2\|^p=2  \right\}.$$
\end{proof}
	Next, we are going to present the upper and lower bounds of this constant. Before we start, we need the following simple lemma.
	\begin{Lemma}\label{l1}
			If $\lambda>0,\mu>0$, then $\lambda^p+\mu^p \leq (\lambda+\mu)^p \leq 2^{p-1}(\lambda^p+\mu^p)$ is always valid for any $1\leq p<+\infty$.
	\end{Lemma}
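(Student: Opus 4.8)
The plan is to prove the two-sided inequality $\lambda^p+\mu^p \leq (\lambda+\mu)^p \leq 2^{p-1}(\lambda^p+\mu^p)$ by separating it into its two halves and handling each by a standard convexity/monotonicity argument in the single real variable obtained after normalizing.

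For the left-hand inequality, I would first dispose of the trivial cases and then, by homogeneity (both sides are positive-homogeneous of degree $p$ in $(\lambda,\mu)$), reduce to the case $\lambda+\mu=1$, writing $\lambda=s$, $\mu=1-s$ with $s\in(0,1)$. The claim becomes $s^p+(1-s)^p\leq 1$. Since $p\geq 1$ we have $s^p\leq s$ and $(1-s)^p\leq 1-s$ (as $0\le s,1-s\le 1$ and $t\mapsto t^p$ is increasing with $t^p\le t$ on $[0,1]$ when $p\ge1$); adding these gives the bound. Equality analysis (forced $p=1$ or one of $\lambda,\mu$ zero) is not needed but is immediate.

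For the right-hand inequality, I would again use homogeneity to normalize, this time to $\lambda^p+\mu^p=1$ is less convenient; instead I would invoke convexity of $t\mapsto t^p$ on $[0,\infty)$ for $p\geq 1$: by Jensen applied to the two points $\lambda,\mu$ with equal weights $1/2$,
\[
\left(\frac{\lambda+\mu}{2}\right)^p \leq \frac{\lambda^p+\mu^p}{2},
\]
and multiplying through by $2^p$ yields $(\lambda+\mu)^p\leq 2^{p-1}(\lambda^p+\mu^p)$. Alternatively one can normalize $\lambda+\mu=2$ and maximize $\lambda^p+\mu^p$ over the segment, where convexity puts the maximum at an endpoint; but the Jensen route is cleanest.

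There is essentially no serious obstacle here: the lemma is elementary, and the only thing to be careful about is making the $p=1$ boundary case and the degenerate cases $\lambda=0$ or $\mu=0$ consistent with the stated hypothesis $\lambda,\mu>0$ (so those are vacuous) and noting that both inequalities become equalities when $p=1$. I would present the proof in two short paragraphs, one per inequality, citing only the convexity of $x\mapsto x^p$ and the elementary estimate $x^p\le x$ for $x\in[0,1]$, $p\ge1$.
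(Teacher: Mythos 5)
Your proof is correct and complete: the left inequality via normalization to $\lambda+\mu=1$ and the elementary bound $t^p\le t$ on $[0,1]$ for $p\ge 1$, and the right inequality via convexity of $t\mapsto t^p$ (Jensen with equal weights), are exactly the standard arguments. The paper itself states this lemma without proof, calling it a ``simple lemma,'' so there is nothing to compare against; your write-up supplies the omitted details correctly.
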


	\begin{Proposition}\label{p3}
	Suppose that $X$ is a nontrivial Banach space. Then   $$ \frac{\min\{\mu^p, \lambda^p\}}{2^{p-3} (\lambda^p + \mu^p)} \leq C_{-\infty}^p(\lambda, \mu, X) \leq 2.$$
	\end{Proposition}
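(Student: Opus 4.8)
The plan is to establish the lower bound by exhibiting a specific test pair, and the upper bound by a direct norm estimate using the triangle inequality together with Lemma~\ref{l1}. For the lower bound, I would take $x_1 = x_2 = x$ for any fixed $x \in S_X$ (this is legitimate since $\dim X \geq 2$, so $S_X \neq \emptyset$). Then $\|x_1\|^p + \|x_2\|^p = 2$, and we compute $\|\lambda x_1 + \mu x_2\| = (\lambda + \mu)\|x\| = \lambda + \mu$ and $\|\mu x_1 - \lambda x_2\| = |\mu - \lambda|\,\|x\| = |\mu - \lambda|$. So the numerator $\min\{\|\lambda x_1 + \mu x_2\|^p, \|\mu x_1 - \lambda x_2\|^p\} = \min\{(\lambda+\mu)^p, |\mu-\lambda|^p\}$, which in general is not quite what the stated bound wants. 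Instead I would choose $x_1, x_2$ to be a pair realizing some convenient configuration; the cleanest is to pick $x_1 = x$, $x_2 = \mathbf{0}$ — wait, that fails the normalization $\|x_1\|^p+\|x_2\|^p=2$. So rather, pick $x_1 = 2^{1/p} x$ and $x_2 = \mathbf 0$: then $\|x_1\|^p + \|x_2\|^p = 2$, $\|\lambda x_1 + \mu x_2\| = 2^{1/p}\lambda$, $\|\mu x_1 - \lambda x_2\| = 2^{1/p}\mu$, so the numerator is $2\min\{\lambda^p,\mu^p\}$ and by Proposition~2 the ratio equals $\frac{2\min\{\lambda^p,\mu^p\}}{2^{p-2}(\lambda^p+\mu^p)} = \frac{\min\{\mu^p,\lambda^p\}}{2^{p-3}(\lambda^p+\mu^p)}$, giving exactly the claimed lower bound.

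For the upper bound, I would work from the definition with $\|x_1\| = 1$, $\|x_2\| \leq 1$ (or the $\|x_1\|^p + \|x_2\|^p = 2$ form of Proposition~2). The key observation is that $\min\{a,b\} \leq \frac{a+b}{2}$ for nonnegative reals, so
\[
\min\{\|\lambda x_1 + \mu x_2\|^p, \|\mu x_1 - \lambda x_2\|^p\} \leq \frac{\|\lambda x_1 + \mu x_2\|^p + \|\mu x_1 - \lambda x_2\|^p}{2}.
\]
Then by the triangle inequality $\|\lambda x_1 + \mu x_2\| \leq \lambda\|x_1\| + \mu\|x_2\|$ and $\|\mu x_1 - \lambda x_2\| \leq \mu\|x_1\| + \lambda\|x_2\|$. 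Using $\|x_1\|, \|x_2\| \leq 1$ (from the normalized form with $\|x_1\|=1,\|x_2\|\le 1$) each of these is at most $\lambda + \mu$, so the numerator is at most $\frac{2(\lambda+\mu)^p}{2} = (\lambda+\mu)^p$, and by Lemma~\ref{l1}, $(\lambda+\mu)^p \leq 2^{p-1}(\lambda^p + \mu^p)$. Dividing by the denominator $2^{p-2}(\lambda^p+\mu^p)$ gives the bound $\frac{2^{p-1}(\lambda^p+\mu^p)}{2^{p-2}(\lambda^p+\mu^p)} = 2$.

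I do not expect a serious obstacle here; the only mild subtlety is bookkeeping with which equivalent form of the constant to use so that the normalization constraints ($\|x_1\|=1$, $\|x_2\|\le 1$, versus $\|x_1\|^p+\|x_2\|^p=2$) line up cleanly with the triangle-inequality estimates, and being careful that the exponent $p$ enters the powers of $2$ in the denominator correctly. Using Proposition~2's formulation for both bounds keeps the denominator uniformly $2^{p-2}(\lambda^p+\mu^p)$ and makes the lower-bound test vector $(2^{1/p}x, \mathbf 0)$ and the upper-bound triangle estimate both transparent.
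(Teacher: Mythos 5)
Your lower bound is correct and is essentially the paper's argument: the paper takes $x_1=0$, $x_2\neq 0$ in the original (homogeneous) form of the definition, while you take $x_2=0$, $x_1=2^{1/p}x$ in the normalized form of Proposition 2; either way the ratio evaluates exactly to $\frac{\min\{\lambda^p,\mu^p\}}{2^{p-3}(\lambda^p+\mu^p)}$.

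The upper bound, however, has a genuine gap: you mix two incompatible normalizations. You bound the numerator by $(\lambda+\mu)^p$ using $\|x_1\|,\|x_2\|\le 1$, but then divide by the denominator $2^{p-2}(\lambda^p+\mu^p)$, which is only the correct denominator under the constraint $\|x_1\|^p+\|x_2\|^p=2$ (where $\|x_i\|$ can be as large as $2^{1/p}>1$, so the numerator bound fails). Under the normalization $\|x_1\|=1$, $\|x_2\|=t\le 1$ that you actually invoke, the denominator is $2^{p-3}(\lambda^p+\mu^p)(1+t^p)$, and your estimate only yields $\frac{2^{p-1}(\lambda^p+\mu^p)}{2^{p-3}(\lambda^p+\mu^p)(1+t^p)}=\frac{4}{1+t^p}$, which tends to $4$, not $2$, as $t\to 0$. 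The two constraints are simultaneously satisfied only when $\|x_1\|=\|x_2\|=1$, so your argument covers only part of the supremum. The fix is the one the paper uses: apply Lemma \ref{l1} in the homogeneous form to the pair $(\lambda\|x_1\|,\mu\|x_2\|)$, namely
\[
(\lambda\|x_1\|+\mu\|x_2\|)^p+(\mu\|x_1\|+\lambda\|x_2\|)^p\le 2^{p-1}(\lambda^p+\mu^p)(\|x_1\|^p+\|x_2\|^p),
\]
which keeps the factor $\|x_1\|^p+\|x_2\|^p$ in the numerator so that it cancels against the same factor in the denominator, yielding the bound $2$ for every admissible pair regardless of normalization.
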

	
	\begin{proof}
	First, by Lemma \ref{l1}, we have
	$$\begin{aligned}	(\lambda\|x_1\|+\mu\|x_2\|)^p+(\mu\|x_1\|+\lambda\|x_2\|)^p &\leq 2^{p-1}(\lambda^p\|x_1\|^p+\mu^p\|x_2\|^p+\mu^p\|x_1\|^p+\lambda^p\|x_2\|^p)	\\&= 2^{p-1} (\lambda^p + \mu^p) (\|x_1\|^p+\|x_2\|^p).
	\end{aligned}$$
	Since
	$$\min\{\|\lambda x_1+\mu x_2\|^p,\|\mu x_1-\lambda x_2\|^p\} \leq \frac{\|\lambda x_1+\mu x_2\|^p+\|\mu x_1-\lambda x_2\|^p}{2},$$
	we have$$\begin{aligned}
		\frac{\min\{\| \lambda x_1 + \mu x_2 \|^p , \| \mu x_1- \lambda x_2 \|^p\}}{2^{p-3} (\lambda^p + \mu^p) (\| x_1\|^p + \| x_2 \|^p)}&\leq \frac{\| \lambda x_1 + \mu x_2 \|^p + \| \mu x_1- \lambda x_2 \|^p}{2^{p-2} (\lambda^p + \mu^p) (\| x_1\|^p + \| x_2 \|^p)}\\&\leq \frac{(\lambda\|x_1\|+\mu\|x_2\|)^p+(\mu\|x_1\|+\lambda\|x_2\|)^p}{2^{p-2} (\lambda^p + \mu^p) (\| x_1\|^p + \| x_2 \|^p)}\\&\leq 
		\frac{2^{p-1}(\lambda^p+\mu^p)(\|x_1\|^p+\|x_2\|^p)}{2^{p-2}(\lambda^p+\mu^p)(\|x_1\|^p+\|x_2\|^p)}
		\\&\leq
		2.
	\end{aligned}$$
Hence we obtain $ C_{-\infty}^p(\lambda, \mu, X) \leq 2$.
	
	On the other hand, let $x=0,y\neq 0$, we have
	$$\begin{aligned}
		\frac{\min\{\| \lambda x_1 + \mu x_2 \|^p , \| \mu x_1- \lambda x_2 \|^p\}}{2^{p-3} (\lambda^p + \mu^p) (\| x_1\|^p + \| x_2 \|^p)}=\frac{\min\{\mu^p\|x_2\|^p,\lambda^p\|x_2\|^p\}}{2^{p-3}(\lambda^p+\mu^p)\|x_2\|^p}		=\frac{\min\{\mu^p,\lambda^p\}}{2^{p-3}(\lambda^p+\mu^p)}.
	\end{aligned}$$
	Therefore, $ C_{-\infty}^p(\lambda, \mu, X)\geq\frac{\min\{\mu^p, \lambda^p\}}{2^{p-3} (\lambda^p + \mu^p)}$ holds.
	\end{proof}
	\begin{Corollary}
	If $\lambda=\mu=1$ and $p=1$, then $C_{-\infty}^1(1,1, X)=2$.
	\end{Corollary}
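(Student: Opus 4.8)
The plan is to deduce the corollary directly from Proposition \ref{p3}, which already furnishes both bounds. Specializing the upper bound there gives $C_{-\infty}^1(1,1,X)\le 2$ with no further work, since the estimate $C_{-\infty}^p(\lambda,\mu,X)\le 2$ was proved for all admissible $\lambda,\mu,p$.

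For the matching lower bound I would substitute $\lambda=\mu=1$ and $p=1$ into the expression $\dfrac{\min\{\mu^p,\lambda^p\}}{2^{p-3}(\lambda^p+\mu^p)}$ appearing in Proposition \ref{p3}. Here $2^{p-3}=2^{-2}=\tfrac14$, $\lambda^p+\mu^p=2$, and $\min\{\mu^p,\lambda^p\}=1$, so the quantity equals $\dfrac{1}{(1/4)\cdot 2}=2$. Hence Proposition \ref{p3} yields $C_{-\infty}^1(1,1,X)\ge 2$. Combining the two inequalities gives $C_{-\infty}^1(1,1,X)=2$.

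There is essentially no obstacle: the corollary is a numerical specialization of an already-established two-sided estimate, and the only thing to check is the arithmetic $2^{p-3}(\lambda^p+\mu^p)=\tfrac12$ at $\lambda=\mu=1$, $p=1$. If one wished to make the lower bound self-contained rather than quoting Proposition \ref{p3}, one could instead test the defining supremum on the pair $x_1=0$, $x_2\in S_X$, for which the ratio $\dfrac{\min\{\|x_1+x_2\|,\|x_1-x_2\|\}}{2^{-2}(1+1)(\|x_1\|+\|x_2\|)}$ equals exactly $2$; but invoking the proposition is the cleanest route.
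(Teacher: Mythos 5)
Your proposal is correct and follows exactly the paper's own argument: both bounds come from Proposition \ref{p3}, and the only work is the arithmetic $\min\{1,1\}/\bigl(2^{-2}\cdot 2\bigr)=2$, which you carry out correctly. No differences worth noting.
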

	\begin{proof}
	If $\lambda=\mu=1$ and $p=1$, then $\frac{\min\{\mu^p, \lambda^p\}}{2^{p-3} (\lambda^p + \mu^p)}=2$ holds.
Then according to Proposition \ref{p3}, $C_{-\infty}^1(1,1, X)=2$ is valid.\end{proof}

	Next we will give the estimate of the $C_{-\infty}^p(\lambda, \mu, X)$ constant in some
	specific Banach spaces when $p > 1$.

		\begin{Example}Consider $X = (\mathbb{R}^2, \|\cdot\|_1)$, that is $X=R^2$ endowed with the norm $\|(a, b)\|= |a|+|b|$. Then ${C^{P}_{-\infty}}(1,1,X)=2 $.
			
	 Let $a=(1,0),b=(0,1)$, then we have $\|a\|=\|b\|=1,\|a+b\|=2$, and $\|a-b\|=2$, hence
	 $$\frac{\min\{\|a+b\|^p,\|a-b\|^p\}}{2^{p-2}(\|a\|^p+\|b\|^p)}=2,$$
	 which implies that ${C^{P}_{-\infty}}(1,1,X)\geq2 $. Thus ${C^{P}_{-\infty}}(1,1,X)=2$ according to Proposition \ref{p3}.\end{Example}
	
		\begin{Example} Consider  $X=(\mathbb{R}^2, \|\cdot\|_\infty)$, that is $X=R^2$ endowed with the norm $\|a, b\| = \max\{|a|, |b|\}$. Then
        ${C^{P}_{-\infty}}(1,1,X)=2 $. 
	   	
	   	 Let $a=(1,1),b=(1,-1)$, we have $\|a\|=\|b\|=1,\|a+b\|=2$, and $\|a-b\|=2$, hence
	   	$$\frac{\min\{\|a+b\|^p,\|a-b\|^p\}}{2^{p-2}(\|a\|^p+\|b\|^p)}=2,$$\\
	   which implies that ${C^{P}_{-\infty}}(1,1,X)\geq2 $. Thus ${C^{P}_{-\infty}}(1,1,X)=2$ according to Proposition \ref{p3}.\end{Example}

	\begin{Theorem}
	Let $X$ be a Banach space. Then 
		 $$C_{-\infty}^p(\lambda, \mu, X) \leq C_{NJ}^p(\lambda, \mu, X)\leq\frac{1}{2}C_{-\infty}^p(\lambda, \mu, X)+\frac{4\max\{\lambda^p,\mu^p\}}{\lambda^p+\mu^p}.$$
\end{Theorem}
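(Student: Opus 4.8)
The plan is to prove the two inequalities separately, both by comparing the $\min$ of the two relevant norms to their sum (or to each term individually). For the left inequality $C_{-\infty}^p(\lambda,\mu,X)\le C_{NJ}^p(\lambda,\mu,X)$, I would start from an arbitrary pair $(x_1,x_2)\ne(0,0)$ and use the trivial bound $\min\{a,b\}\le \tfrac{a+b}{2}$ applied to $a=\|\lambda x_1+\mu x_2\|^p$ and $b=\|\mu x_1-\lambda x_2\|^p$. Dividing by $2^{p-3}(\lambda^p+\mu^p)(\|x_1\|^p+\|x_2\|^p)$ turns the right-hand side into exactly $\dfrac{\|\lambda x_1+\mu x_2\|^p+\|\mu x_1-\lambda x_2\|^p}{2^{p-2}(\lambda^p+\mu^p)(\|x_1\|^p+\|x_2\|^p)}$, whose supremum over admissible pairs is by definition $C_{NJ}^p(\lambda,\mu,X)$. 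Taking the supremum on the left gives the claim.

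For the right inequality, the idea is the elementary fact $\max\{a,b\}=a+b-\min\{a,b\}$, so $\dfrac{a+b}{2^{p-2}D}=\dfrac{\min\{a,b\}}{2^{p-2}D}+\dfrac{\max\{a,b\}}{2^{p-2}D}$, where $D=(\lambda^p+\mu^p)(\|x_1\|^p+\|x_2\|^p)$. The first term is bounded above by $\tfrac12 C_{-\infty}^p(\lambda,\mu,X)$ (using the factor $2^{p-2}=2\cdot 2^{p-3}$ in the denominator versus the $2^{p-3}$ appearing in the definition of $C_{-\infty}^p$). For the second term I need a uniform bound on $\dfrac{\max\{\|\lambda x_1+\mu x_2\|^p,\|\mu x_1-\lambda x_2\|^p\}}{2^{p-2}(\lambda^p+\mu^p)(\|x_1\|^p+\|x_2\|^p)}$. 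I would estimate each norm by the triangle inequality, $\|\lambda x_1+\mu x_2\|\le \lambda\|x_1\|+\mu\|x_2\|\le \max\{\lambda,\mu\}(\|x_1\|+\|x_2\|)$ and similarly $\|\mu x_1-\lambda x_2\|\le \max\{\lambda,\mu\}(\|x_1\|+\|x_2\|)$, so the numerator is at most $\max\{\lambda^p,\mu^p\}(\|x_1\|+\|x_2\|)^p$. Then I use $(\|x_1\|+\|x_2\|)^p\le 2^{p-1}(\|x_1\|^p+\|x_2\|^p)$ from Lemma \ref{l1}, which cancels the $\|x_1\|^p+\|x_2\|^p$ and leaves $\dfrac{2^{p-1}\max\{\lambda^p,\mu^p\}}{2^{p-2}(\lambda^p+\mu^p)}=\dfrac{2\max\{\lambda^p,\mu^p\}}{\lambda^p+\mu^p}$.

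Putting these together yields, for every admissible pair, $\dfrac{\|\lambda x_1+\mu x_2\|^p+\|\mu x_1-\lambda x_2\|^p}{2^{p-2}(\lambda^p+\mu^p)(\|x_1\|^p+\|x_2\|^p)}\le \tfrac12 C_{-\infty}^p(\lambda,\mu,X)+\dfrac{2\max\{\lambda^p,\mu^p\}}{\lambda^p+\mu^p}$, and taking the supremum of the left side gives $C_{NJ}^p(\lambda,\mu,X)$ on the left. The stated bound has $4\max\{\lambda^p,\mu^p\}/(\lambda^p+\mu^p)$ rather than $2\max\{\lambda^p,\mu^p\}/(\lambda^p+\mu^p)$, so either a coarser bound $(\|x_1\|+\|x_2\|)^p\le 2^{p}(\|x_1\|^p+\|x_2\|^p)$-type estimate is intended, or there is slack; in any case the cleaner constant $2$ already suffices, so I would present the sharper version and note it implies the stated one. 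The main obstacle — really the only non-mechanical point — is choosing the estimate for $\max\{a,b\}$ efficiently: one must avoid bounding $\max\{a,b\}$ by $a+b$ (which would reintroduce $C_{NJ}^p$ on the right and give a vacuous circular inequality) and instead bound it directly via the triangle inequality as above, which decouples it from the constant being estimated.
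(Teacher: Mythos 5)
Your proposal is correct and follows essentially the same route as the paper: $\min\le\tfrac{a+b}{2}$ for the left inequality, and $\max\{a,b\}=a+b-\min\{a,b\}$ plus the triangle inequality and Lemma \ref{l1} for the right one. Your observation about the constant is also accurate — the paper's own computation writes $2^{p-3}$ instead of $2^{p-2}$ in the denominator of the $\max$ term, which is where the factor $4$ (rather than the sharper $2$ that the argument naturally yields, and which implies the stated bound) comes from.
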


	 \begin{proof}
	On the one hand, it is immediately demonstrable by
		$$
		\frac{\min\{\| \lambda x_1 + \mu x_2 \|^p , \| \mu x_1- \lambda x_2 \|^p\}}{2^{p-3} (\lambda^p + \mu^p) (\| x_1\|^p + \| x_2 \|^p)}
		\leq \frac{\| \lambda x_1 + \mu x_2 \|^p + \| \mu x_1- \lambda x_2 \|^p}{2^{p-2} (\lambda^p + \mu^p) (\| x_1\|^p + \| x_2 \|^p)}.
		$$
	One the other hand, obviously, the $C_{-\infty}^p(\lambda, \mu, X)$ constant can be denoted as the following form:	$$C_{-\infty}^p(\lambda, \mu, X) = \sup\left\{\frac{\min\{\| \lambda x_1 + \mu tx_2 \|^p , \| \mu x_1 - \lambda tx_2 \|^p\}}{2^{p-3} (\lambda^p + \mu^p) (1+t^p)} : \, x_1, x_2 \in S_X,0\leq t\leq 1 \right\},$$where $\lambda>0,\mu>0$. Then since $$\|\lambda x_1+\mu tx_2\|^p+\|\mu x_1-\lambda tx_2\|^p\leq\min\{\|\lambda x_1+\mu tx_2\|^p,\|\mu x_1-\lambda tx_2\|^p\}+\max\{\lambda^p,\mu^p\}(1+t)^p$$ for all $x_1,x_2\in S_X$, then by Lemma \ref{l1}, we obtain that$$\begin{aligned}C_{NJ}^{p}(\lambda,\mu,X)&\leq\frac{1}{2}C^p_{-\infty}(\lambda,\mu,X)+\frac{\max\{\lambda^p,\mu^{p}\}(1+t)^{p}}{2^{p-3}(\lambda^{p}+\mu^{p})(1+t^{p})}\\&\leq\frac{1}{2}C^p_{-\infty}(\lambda,\mu,X)+\frac{2^{p-1}\max\{\lambda^{p},\mu^{p}\}(1+t^{p})}{2^{p-3}(\lambda^{p}+\mu^{p})(1+t^{p})}\\&=\frac{1}{2}C^p_{-\infty}(\lambda,\mu,X)+\frac{4\max\{\lambda^{p},\mu^{p}\}}{\lambda^{p}+\mu^{p}},\end{aligned}$$
	which implies that $$ C_{NJ}^p(\lambda, \mu, X)\leq\frac{1}{2}C_{-\infty}^p(\lambda, \mu, X)+\frac{4\max\{\lambda^p,\mu^p\}}{\lambda^p+\mu^p}.$$
	This completes the proof.

		\end{proof}
\begin{Corollary}
Assume that the dimension of $X$ is finite and $C_{-\infty}^p(\lambda, \mu, X) = C_{NJ}^p(\lambda, \mu, X)$. Then, there exist vectors $x_1, x_2 \in X$ such that $\| \lambda x_1 + \mu x_2\| = \| \mu x_1 - \lambda x_2 \|$ and both constants achieve their respective suprema.
\end{Corollary}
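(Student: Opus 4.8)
The plan is to use finite-dimensionality to turn the two suprema into attained maxima, and then to squeeze the two defining functionals against each other at a common maximizer.

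First I would introduce, on $D:=(X\times X)\setminus\{(0,0)\}$, the two $0$-homogeneous continuous functions
$$\phi(x_1,x_2)=\frac{\min\{\|\lambda x_1+\mu x_2\|^p,\|\mu x_1-\lambda x_2\|^p\}}{2^{p-3}(\lambda^p+\mu^p)(\|x_1\|^p+\|x_2\|^p)},\qquad \psi(x_1,x_2)=\frac{\|\lambda x_1+\mu x_2\|^p+\|\mu x_1-\lambda x_2\|^p}{2^{p-2}(\lambda^p+\mu^p)(\|x_1\|^p+\|x_2\|^p)},$$
so that $C_{-\infty}^p(\lambda,\mu,X)=\sup_D\phi$ and $C_{NJ}^p(\lambda,\mu,X)=\sup_D\psi$. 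Since $2\min\{s,t\}\le s+t$ for all reals $s,t$, and the denominator of $\phi$ is half that of $\psi$, we get $\phi\le\psi$ on $D$, with the crucial feature that $\phi(x_1,x_2)=\psi(x_1,x_2)$ forces $\|\lambda x_1+\mu x_2\|=\|\mu x_1-\lambda x_2\|$ (the equality case of $2\min\le$ sum). Because $\phi$ and $\psi$ are invariant under positive rescaling of $(x_1,x_2)$, their suprema over $D$ equal their suprema over $M:=\{(x_1,x_2):\|x_1\|^p+\|x_2\|^p=2\}$; as $\dim X<\infty$, $M$ is compact, and both functionals are continuous on $M$ (the denominator is the positive constant $2^{p-2}(\lambda^p+\mu^p)$ there), so both suprema are attained.

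Next I would pick $(u,v)\in M$ with $\phi(u,v)=\sup_M\phi=C_{-\infty}^p(\lambda,\mu,X)$, and run the sandwich
$$C_{-\infty}^p(\lambda,\mu,X)=\phi(u,v)\le\psi(u,v)\le\sup_D\psi=C_{NJ}^p(\lambda,\mu,X).$$
Under the hypothesis $C_{-\infty}^p(\lambda,\mu,X)=C_{NJ}^p(\lambda,\mu,X)$ all three quantities coincide, so $\phi(u,v)=\psi(u,v)$, which by the previous step yields $\|\lambda u+\mu v\|=\|\mu u-\lambda v\|$. Moreover $\phi(u,v)=C_{-\infty}^p(\lambda,\mu,X)$ by the choice of $(u,v)$ and $\psi(u,v)=C_{NJ}^p(\lambda,\mu,X)$ from the chain of equalities, so the pair $x_1=u$, $x_2=v$ realizes both suprema simultaneously, which is precisely the assertion.

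The only point needing care — the main obstacle — is the reduction from $\sup_D$ to an attained $\max_M$: one has to confirm that $\phi$ and $\psi$ are genuinely continuous on the compact set $M$ (the denominators do not vanish and the numerators depend continuously on $(x_1,x_2)$) and that $0$-homogeneity loses nothing when restricting from $D$ to $M$. Both are routine. Everything else reduces to the elementary inequality $2\min\{s,t\}\le s+t$ and its equality case.
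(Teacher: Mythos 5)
Your proof is correct and follows essentially the same route as the paper: the pointwise inequality $\phi\le\psi$ with equality exactly when $\| \lambda x_1 + \mu x_2\| = \| \mu x_1 - \lambda x_2 \|$, a maximizer of $\phi$ obtained from finite-dimensionality, and the sandwich $C_{-\infty}^p=\phi(u,v)\le\psi(u,v)\le C_{NJ}^p$. The only difference is that you spell out the attainment step (homogeneity plus compactness of the set $\|x_1\|^p+\|x_2\|^p=2$), which the paper simply asserts.
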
	
		\begin{proof}
				For any $x_1,x_2 \in X$,we have 
			$$
			\frac{\min\{\| \lambda x_1 + \mu x_2 \|^p , \| \mu x_1- \lambda x_2 \|^p\}}{2^{p-3} (\lambda^p + \mu^p) (\| x_1\|^p + \| x_2 \|^p)}
			\leq \frac{\| \lambda x_1 + \mu x_2 \|^p + \| \mu x_1- \lambda x_2 \|^p}{2^{p-2} (\lambda^p + \mu^p) (\| x_1\|^p + \| x_2 \|^p)}
			$$
			and the identity holds if and only if $\| \lambda x_1 + \mu x_2\| = \| \mu x_1 - \lambda x_2 \|$.
			
			Now, we suppose that dim $X < +\infty$ and let $x_1,x_2 \in X$ be such that
			$$C_{-\infty}^p(\lambda, \mu, X)=\frac{\min\{\| \lambda x_1 + \mu x_2 \|^p , \| \mu x_1- \lambda x_2 \|^p\}}{2^{p-3} (\lambda^p + \mu^p) (\| x_1\|^p + \| x_2 \|^p)}.$$
			
			Then if $C_{-\infty}^p(\lambda, \mu, X) = C_{NJ}^p(\lambda, \mu, X)$, we obtain that $$C_{-\infty}^p(\lambda, \mu, X) = 	\frac{\min\{\| \lambda x_1 + \mu x_2 \|^p , \| \mu x_1- \lambda x_2 \|^p\}}{2^{p-3} (\lambda^p + \mu^p) (\| x_1\|^p + \| x_2 \|^p)} \leq \frac{\| \lambda x_1 + \mu x_2 \|^p + \| \mu x_1- \lambda x_2 \|^p}{2^{p-2} (\lambda^p + \mu^p) (\| x_1\|^p + \| x_2 \|^p)} \leq     C_{NJ}^p(\lambda, \mu, X).$$
			
			Therefore, it is proven that.
		\end{proof}
	Next, based on Hahn-Banach theorem, we will establish an inequality relationship between $C_{-\infty}^p(\lambda, \mu, X)$ and $C_{-\infty}^p(\lambda, \mu, X^*)$.
	\begin{Theorem}
		Let $X$ be a Banach space. Then$$ \frac{1}{2^{p-2}}C_{-\infty}^p(\lambda, \mu, X)-\frac{(\lambda+\mu)^p}{2^{p-2}(\lambda^p+\mu^p)}\leq C_{-\infty}^p(\lambda, \mu, X^*)\leq2^{p-2}C_{-\infty}^p(\lambda, \mu, X)+\frac{(\lambda+\mu)^p}{\lambda^p+\mu^p}.$$
	\end{Theorem}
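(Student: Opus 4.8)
The plan is to reduce both displayed inequalities to a single ``duality estimate'' and prove that by a Hahn--Banach argument. First I would establish that for \emph{every} Banach space $Y$,
\[
C_{-\infty}^p(\lambda,\mu,Y^*) \;\le\; 2^{p-2}\,C_{-\infty}^p(\lambda,\mu,Y) + \frac{(\lambda+\mu)^p}{\lambda^p+\mu^p}.\qquad (\dagger)
\]
Granting $(\dagger)$, the right-hand inequality of the theorem is $(\dagger)$ with $Y=X$. For the left-hand inequality, apply $(\dagger)$ to $Y=X^*$, obtaining $C_{-\infty}^p(\lambda,\mu,X^{**})\le 2^{p-2}C_{-\infty}^p(\lambda,\mu,X^*)+(\lambda+\mu)^p/(\lambda^p+\mu^p)$; since the canonical embedding $X\hookrightarrow X^{**}$ is isometric and $C_{-\infty}^p(\lambda,\mu,\cdot)$ does not increase when one passes to a closed subspace (its defining supremum is then taken over a smaller set), we have $C_{-\infty}^p(\lambda,\mu,X)\le C_{-\infty}^p(\lambda,\mu,X^{**})$, and rearranging yields exactly $\frac{1}{2^{p-2}}C_{-\infty}^p(\lambda,\mu,X)-\frac{(\lambda+\mu)^p}{2^{p-2}(\lambda^p+\mu^p)}\le C_{-\infty}^p(\lambda,\mu,X^*)$.

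To prove $(\dagger)$ I would work with the equivalent form recorded just after the definition: $C_{-\infty}^p(\lambda,\mu,Y^*)$ is the supremum of $\min\{\|\lambda f+\mu tg\|^p,\|\mu f-\lambda tg\|^p\}\big/\big(2^{p-3}(\lambda^p+\mu^p)(1+t^p)\big)$ over $f,g\in S_{Y^*}$ and $0\le t\le1$. Fix such $f,g,t$ and $\varepsilon>0$. By the definition of the norm on $Y^*$ there are $u,v\in S_Y$ with $(\lambda f+\mu tg)(u)>\|\lambda f+\mu tg\|-\varepsilon$ and $(\mu f-\lambda tg)(v)>\|\mu f-\lambda tg\|-\varepsilon$. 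The decisive observation is the identity
\[
(\lambda f+\mu tg)(u)+(\mu f-\lambda tg)(v)=f(\lambda u+\mu v)+t\,g(\mu u-\lambda v)\le \|\lambda u+\mu v\|+t\,\|\mu u-\lambda v\|,
\]
valid because $\|f\|=\|g\|=1$. Hence $\|\lambda f+\mu tg\|+\|\mu f-\lambda tg\|<\|\lambda u+\mu v\|+t\|\mu u-\lambda v\|+2\varepsilon$, and since $\min\{A^p,B^p\}\le\big(\tfrac{A+B}{2}\big)^p$ for $A,B\ge0$, $p\ge1$, letting $\varepsilon\to0$ gives
\[
\min\{\|\lambda f+\mu tg\|^p,\|\mu f-\lambda tg\|^p\}\le 2^{-p}\big(\|\lambda u+\mu v\|+t\|\mu u-\lambda v\|\big)^p .
\]

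Now I would feed in two a priori facts about the pair $(u,v)\in S_Y\times S_Y$: first, $\|\lambda u+\mu v\|,\|\mu u-\lambda v\|\le\lambda+\mu$ by the triangle inequality; second, $\min\{\|\lambda u+\mu v\|^p,\|\mu u-\lambda v\|^p\}\le 2^{p-2}(\lambda^p+\mu^p)\,C_{-\infty}^p(\lambda,\mu,Y)$, which is the definition of $C_{-\infty}^p(\lambda,\mu,Y)$ applied to $(u,v)$ (here $\|u\|^p+\|v\|^p=2$). Splitting according to which of $\|\lambda u+\mu v\|$, $\|\mu u-\lambda v\|$ is the smaller, estimating the larger by $\lambda+\mu$, then applying $(x+y)^p\le 2^{p-1}(x^p+y^p)$ (Lemma \ref{l1}) and dividing through by $2^{p-3}(\lambda^p+\mu^p)(1+t^p)$, one arrives at $(\dagger)$. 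For $p\ge2$ this goes through at once and in fact gives the sharper bound $C_{-\infty}^p(\lambda,\mu,Y^*)\le C_{-\infty}^p(\lambda,\mu,Y)+(\lambda+\mu)^p/(2^{p-2}(\lambda^p+\mu^p))$; for $1\le p<2$ one must be more careful -- retaining the factor $1+t^p$ in the denominator and invoking the two-sided estimate $\frac{\min\{\lambda^p,\mu^p\}}{2^{p-3}(\lambda^p+\mu^p)}\le C_{-\infty}^p(\lambda,\mu,Y)\le 2$ of Proposition \ref{p3} -- in order to absorb the degenerate configuration in which $t$ is close to $0$ and $u,v$ may be taken nearly equal (so that $\|\mu u-\lambda v\|$ can be as small as $|\lambda-\mu|$).

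The routine ingredients are the reduction, the choice of near-maximisers, and the algebraic identity; I expect the genuine obstacle to be precisely this last step, namely organising the power-mean estimates so that the powers of $2$ and the term $(\lambda+\mu)^p/(\lambda^p+\mu^p)$ come out exactly as stated, uniformly in $t\in[0,1]$ and without mishandling the degenerate cases (in particular showing that the crude estimate valid for $p\ge2$ can be tightened to the stated one for $p<2$). It is also worth checking at the outset the elementary but indispensable monotonicity $C_{-\infty}^p(\lambda,\mu,Z)\le C_{-\infty}^p(\lambda,\mu,W)$ for closed subspaces $Z\subseteq W$, on which the whole lower-bound half rests via $X\subseteq X^{**}$.
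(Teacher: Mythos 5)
Your overall architecture is genuinely different from the paper's, and the reduction itself is sound: the paper proves the two inequalities by two separate Hahn--Banach arguments (norming functionals for a near-extremal pair in $X$ for the lower bound; near-norming vectors for a pair of functionals for the upper bound), whereas you prove a single estimate $(\dagger)$ for every $Y$ and recover the lower bound by applying it to $Y=X^*$, using the isometric embedding $X\hookrightarrow X^{**}$ and the (easily verified) monotonicity of $C^p_{-\infty}(\lambda,\mu,\cdot)$ under passing to closed subspaces. That buys one computation instead of two, and your core computation --- near-norming $u,v\in S_Y$, the identity $(\lambda f+\mu tg)(u)+(\mu f-\lambda tg)(v)=f(\lambda u+\mu v)+t\,g(\mu u-\lambda v)$, and the two halves of Lemma \ref{l1} --- is essentially the paper's upper-bound argument, extended to general $t$.

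The gap is exactly at the step you flag, and it is not a routine bookkeeping matter. The bound your estimates actually yield, $C^p_{-\infty}(\lambda,\mu,Y^*)\le C^p_{-\infty}(\lambda,\mu,Y)+\frac{(\lambda+\mu)^p}{2^{p-2}(\lambda^p+\mu^p)}$, implies $(\dagger)$ only when $2^{p-2}\ge 1$; for $1\le p<2$ both the coefficient $2^{p-2}<1$ and the smaller additive term make the stated right-hand side potentially smaller than what you prove, so nothing follows. Your proposed repair does not close it either: in the case where $\|\mu u-\lambda v\|$ is the smaller norm of the pair $(u,v)$ and $t\to 0$, your chain bounds the relevant quotient by $\frac{(\lambda+\mu)^p}{2^{p-2}(\lambda^p+\mu^p)}=2^{2-p}\frac{(\lambda+\mu)^p}{\lambda^p+\mu^p}$, so the excess $(2^{2-p}-1)\frac{(\lambda+\mu)^p}{\lambda^p+\mu^p}$ must be absorbed by $2^{p-2}C^p_{-\infty}(\lambda,\mu,Y)$. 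Proposition \ref{p3} only gives $2^{p-2}C^p_{-\infty}(\lambda,\mu,Y)\ge \frac{2\min\{\lambda^p,\mu^p\}}{\lambda^p+\mu^p}$, which is insufficient when $\min\{\lambda,\mu\}/\max\{\lambda,\mu\}$ is small: already for $p=1$, $\lambda=1$, $\mu\to 0$ the required inequality reads $1\le \frac{2\mu}{1+\mu}$, which is false. The root cause is that replacing $\min\{A,B\}$ by $\frac{A+B}{2}$ is very lossy when $t$ is small and $\lambda\ne\mu$ (then $A\approx\lambda$, $B\approx\mu$ are far apart); you would need a separate direct estimate such as $\min\{A,B\}\le\min\{\lambda,\mu\}+t\max\{\lambda,\mu\}$ for small $t$, glued to your argument for $t$ bounded away from $0$, and you have not supplied this. (For comparison, the paper's own proof quietly restricts to $f,g\in S_{X^*}$, i.e.\ the $t=1$ slice of the supremum defining $C^p_{-\infty}(\lambda,\mu,X^*)$, where its chain does close; it never addresses $t<1$, so your proposal at least identifies the real difficulty even though it does not resolve it.)
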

	\begin{proof}
	  First, for any $\varepsilon>0$, there exist $x_1,x_2\in S_{X}$ such that
		$$\min\{\|\lambda x_1+\mu x_2\|^p,\|\mu x_1-\lambda x_2\|^p\}\geq 2^{p-2}(\lambda^p+\mu^p)C_{-\infty}^p(\lambda, \mu, X)-\varepsilon.$$
		In addition, according to Hahn-Banach theorem, there exist $f,g\in S_{X^*}$ such that
		$$f(\lambda x+\mu y)=\|\lambda x_1+\mu x_2\|,\quad g(\mu x-\lambda y)=\|\mu x_1-\lambda x_2\|.$$
		Then, we have$$
		\begin{aligned}C^p_{-\infty}(\lambda,\mu,X^*)&\geq \frac{\min\{\|\lambda f+\mu g\|^p,\| \mu f-\lambda g\|^p\}}{2^{p-2}\left(\lambda^p+\mu^p\right)}
			\\& \geq \frac{\|\lambda f+\mu g\|^p+\| \mu f-\lambda g\|^p}{2^{p-2}\left(\lambda^p+\mu^p\right)}-\frac{\max\{\|\lambda f+\mu g\|^p,\| \mu f-\lambda g\|^p\}}{2^{p-2} (\lambda^p+\mu^p)} \\
			& \geq\frac{\|\lambda f+\mu g\|^p+\| \mu f-\lambda g\|^p}{2^{p-2}\left(\lambda^p+\mu^p\right)}-\frac{(\lambda+\mu)^p}{2^{p-2}\left(\lambda^p+\mu^p\right)} \\
			&\geq\frac{[(\lambda f+\mu g)(x)+(\mu f-\lambda g)(y)]^p}{2^{2 p-3}\left(\lambda^p+\mu^p\right)}
				-\frac{(\lambda+\mu)^p}{2^{p-2}\left(\lambda^p+\mu^p\right)}\\&=\frac{[\|\lambda x_1+\mu x_2\|+\|\mu x_1-\lambda x_2\|]^p}{2^{2 p-3}\left(\lambda^p+\mu^p\right)}
				-\frac{(\lambda+\mu)^p}{2^{p-2}\left(\lambda^p+\mu^p\right)}\\&\geq\frac{\min\{\|\lambda x_1+\mu x_2\|^p,\|\mu x_1-\lambda x_2\|^p\}}{2^{2 p-4}\left(\lambda^p+\mu^p\right)}
				-\frac{(\lambda+\mu)^p}{2^{p-2}\left(\lambda^p+\mu^p\right)}\\&\geq\frac{2^{p-2}(\lambda^p+\mu^p)C_{-\infty}(\lambda,\mu,X)-\varepsilon}{2^{2 p-4}\left(\lambda^p+\mu^p\right)}
				-\frac{(\lambda+\mu)^p}{2^{p-2}\left(\lambda^p+\mu^p\right)}.
		\end{aligned}
		$$
		Then let $\varepsilon\to 0$, we have $$ C_{-\infty}^p(\lambda, \mu, X^*)\geq\frac{1}{2^{p-2}}C_{-\infty}^p(\lambda, \mu, X)-\frac{(\lambda+\mu)^p}{2^{p-2}(\lambda^p+\mu^p)}.$$
	Conversely, given $f, g \in S_{X^*}$, for any $\varepsilon > 0$, we can find $x_1, x_2 \in S_X$ such that
		$$
		(\lambda f+\mu g)(x_1)>\|\lambda f+\mu g\|-\varepsilon, \quad(\mu f-\lambda g)(x_2)>\|\mu f-\lambda g\|-\varepsilon
		$$
		Thus, for any $f, g \in S_{X^*}$, we have
		$$
		\begin{aligned}
		\frac{\min \{\|\lambda f+\mu g\|^p,\|\mu f-\lambda g\|^p\}}{2^{p-2}(\lambda^p+\mu^p)} & \leq 	\frac{ \|\lambda f+\mu g\|^p+\|\mu f-\lambda g\|^p}{2^{p-1}(\lambda^p+\mu^p)}  \\
			& \leq\frac{ [\|\lambda f+\mu g\|+\|\mu f-\lambda g\|]^p}{2^{p-1}(\lambda^p+\mu^p)}
			 \\& <\frac{ [(\lambda f+\mu g)(x)+(\mu f-\lambda g)(y)+2\varepsilon]^p}{2^{p-1}(\lambda^p+\mu^p)} 
			 \\& =\frac{[f(\lambda x-\mu y)+g(\mu x-\lambda y)+2\varepsilon]^p}{2^{p-1}(\lambda^p+\mu^p)} \\&\leq\frac{[\|f\|\|\lambda x_1+\mu x_2\|+\|g\|\|\mu x_1-\lambda x_2\|+2\varepsilon]^p}{2^{p-1}(\lambda^p+\mu^p)}\\&\leq\frac{[\|\lambda x_1+\mu x_2\|+\|\mu x_1-\lambda x_2\|+2\varepsilon]^p}{2^{p-1}(\lambda^p+\mu^p)},
		\end{aligned}
		$$
		let $\varepsilon\to 0$, we have $$\begin{aligned}
			\frac{[\|\lambda x_1+\mu x_2\|+\|\mu x_1-\lambda x_2\|]^p}{2^{p-1}(\lambda^p+\mu^p)}&\leq\frac{\|\lambda x_1+\mu x_2\|^p+\|\mu x_1-\lambda x_2\|^p}{\lambda^p+\mu^p}\\&\leq\frac{\min\{\|\lambda x_1+\mu x_2\|^p,\|\mu x_1-\lambda x_2\|^p\}+(\lambda+\mu)^p}{\lambda^p+\mu^p}\\&\leq2^{p-2}C_{-\infty}^p(\lambda, \mu, X)+\frac{(\lambda+\mu)^p}{\lambda^p+\mu^p}.
		\end{aligned}$$
		This implies that $$C_{-\infty}^p(\lambda, \mu, X^*)\leq2^{p-2}C_{-\infty}^p(\lambda, \mu, X)+\frac{(\lambda+\mu)^p}{\lambda^p+\mu^p}.$$
	\end{proof}
	\begin{Theorem}
		Let $X$ be a Banach space.Then the following conclusions are equivalent:
		
		$(i)J(X)=2;$
	
		$(ii)C_{NJ}(X)=2;$
		
		$(iii)C_{-\infty}^{p}(\lambda,\mu,X)=\frac{(\lambda+\mu)^p}{2^{p-2}(\lambda^p+\mu^p)}.$
	\end{Theorem}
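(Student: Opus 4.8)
The plan is to reduce everything to $(i)\Leftrightarrow(iii)$, since $(i)\Leftrightarrow(ii)$ is classical — both say $X$ is not uniformly non-square (\cite{04,05}); concretely, inserting $x_1,x_2\in S_X$ into the definition of $C_{NJ}$ gives $\min\{\|x_1+x_2\|^2,\|x_1-x_2\|^2\}\le\tfrac12(\|x_1+x_2\|^2+\|x_1-x_2\|^2)\le 2\,C_{NJ}(X)$, so $J(X)^2\le 2\,C_{NJ}(X)$, and since $J(X)\le2$ and $C_{NJ}(X)\le2$ always, $J(X)=2\Leftrightarrow C_{NJ}(X)=2$. The first thing I would isolate is the universal upper bound $C_{-\infty}^p(\lambda,\mu,X)\le\frac{(\lambda+\mu)^p}{2^{p-2}(\lambda^p+\mu^p)}$: starting from the reformulation of the constant as a supremum over pairs with $\|x_1\|^p+\|x_2\|^p=2$ (denominator $2^{p-2}(\lambda^p+\mu^p)$), combine $\|\lambda x_1+\mu x_2\|\le\lambda\|x_1\|+\mu\|x_2\|$, $\|\mu x_1-\lambda x_2\|\le\mu\|x_1\|+\lambda\|x_2\|$, $\min\{a,b\}\le\tfrac{a+b}{2}$ and $\|x_1\|+\|x_2\|\le2$ (Lemma~\ref{l1} applied to the scalars $\|x_1\|,\|x_2\|$) to bound the numerator by $(\lambda+\mu)^p$.

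For $(i)\Rightarrow(iii)$ I would argue as follows. Assume $J(X)=2$; given $\varepsilon\in(0,1)$, pick $x,y\in S_X$ with $\|x+y\|>2-\varepsilon$ and $\|x-y\|>2-\varepsilon$, and use Hahn--Banach to choose $f,g\in S_{X^*}$ with $f(x+y)=\|x+y\|$, $g(x-y)=\|x-y\|$. Since $f(x),f(y)\le1$ and $g(x),-g(y)\le1$, we deduce $f(x),f(y)>1-\varepsilon$, $g(x)>1-\varepsilon$ and $-g(y)>1-\varepsilon$, hence
$$\|\lambda x+\mu y\|\ge\lambda f(x)+\mu f(y)>(\lambda+\mu)(1-\varepsilon),\qquad\|\mu x-\lambda y\|\ge\mu g(x)+\lambda(-g(y))>(\lambda+\mu)(1-\varepsilon).$$
Feeding $x_1=x$, $x_2=y$ into the definition of $C_{-\infty}^p(\lambda,\mu,X)$ then gives $C_{-\infty}^p(\lambda,\mu,X)>\frac{(\lambda+\mu)^p(1-\varepsilon)^p}{2^{p-2}(\lambda^p+\mu^p)}$, and letting $\varepsilon\to0$ and invoking the universal bound yields $(iii)$.

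For $(iii)\Rightarrow(i)$ I would take, under the hypothesis, pairs $x_1^{(n)},x_2^{(n)}$ with $\|x_1^{(n)}\|^p+\|x_2^{(n)}\|^p=2$ along which the quotient tends to $C_{-\infty}^p(\lambda,\mu,X)$, so that $\min\{\|\lambda x_1^{(n)}+\mu x_2^{(n)}\|,\|\mu x_1^{(n)}-\lambda x_2^{(n)}\|\}\to\lambda+\mu$. Then $\lambda\|x_1^{(n)}\|+\mu\|x_2^{(n)}\|\ge\lambda+\mu-o(1)$ and $\mu\|x_1^{(n)}\|+\lambda\|x_2^{(n)}\|\ge\lambda+\mu-o(1)$; adding and using $\|x_1^{(n)}\|+\|x_2^{(n)}\|\le2$ forces $\|x_1^{(n)}\|+\|x_2^{(n)}\|\to2$, whence $\|x_1^{(n)}\|\to1$ and $\|x_2^{(n)}\|\to1$ by strict convexity of $t\mapsto t^p+(2-t)^p$ at $t=1$. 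Normalizing, $y_i^{(n)}:=x_i^{(n)}/\|x_i^{(n)}\|\in S_X$ satisfy $\|\lambda y_1^{(n)}+\mu y_2^{(n)}\|\to\lambda+\mu$ and $\|\mu y_1^{(n)}-\lambda y_2^{(n)}\|\to\lambda+\mu$; running the Hahn--Banach step in reverse produces $f_n,g_n\in S_{X^*}$ with $f_n(y_1^{(n)}),f_n(y_2^{(n)})\to1$, $g_n(y_1^{(n)})\to1$, $g_n(y_2^{(n)})\to-1$, so that $\|y_1^{(n)}+y_2^{(n)}\|\to2$ and $\|y_1^{(n)}-y_2^{(n)}\|\to2$, i.e., $J(X)=2$.

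The main obstacle will be this last implication, and inside it the passage from ``the quotient is almost extremal'' to ``both vectors have norm close to $1$'': everything else is bookkeeping, but without pinning the two norms near $1$ one cannot normalize and apply Hahn--Banach. One caveat I would flag: the strict-convexity argument is vacuous for $p=1$, where instead one uses $\|x_1^{(n)}\|+\|x_2^{(n)}\|=2$ directly when $\lambda\neq\mu$; and the borderline case $p=1,\lambda=\mu$ is genuinely degenerate (there the right-hand side of $(iii)$ is $2$, the maximal value, and the identity holds for every $X$), so the theorem should really be read under $p>1$ or $\lambda\neq\mu$.
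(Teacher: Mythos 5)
Your argument is correct and reaches the same equivalences, but by a genuinely different route from the paper. The paper passes through $(ii)$: it takes sequences $x_{1,n}\in S_X$, $x_{2,n}\in B_X$ witnessing $C_{NJ}(X)=2$ (so $\|x_{1,n}\pm x_{2,n}\|\to 2$, $\|x_{2,n}\|\to 1$) and then, by a two-case triangle-inequality decomposition ($\lambda=\mu$ versus $\lambda\neq\mu$, writing e.g. $\lambda x_{1,n}+\mu x_{2,n}=\lambda(x_{1,n}+x_{2,n})-(\lambda-\mu)x_{2,n}$), shows $\min\{\|\lambda x_{1,n}+\mu x_{2,n}\|,\|\mu x_{1,n}-\lambda x_{2,n}\|\}\to\lambda+\mu$. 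You instead go directly $(i)\Rightarrow(iii)$ via Hahn--Banach functionals norming $x+y$ and $x-y$, which handles all $\lambda,\mu$ uniformly with no case split; and you isolate the universal bound $C^p_{-\infty}(\lambda,\mu,X)\le\frac{(\lambda+\mu)^p}{2^{p-2}(\lambda^p+\mu^p)}$ up front, which the paper never states inside this proof (it only exhibits a sequence attaining that value, so strictly speaking it only proves ``$\ge$''; the matching ``$\le$'' is buried in the later Theorem \ref{t3}). Your $(iii)\Rightarrow(i)$ direction --- extracting near-extremal pairs, forcing $\|x_i^{(n)}\|\to 1$ via $s+t\to 2$ on the constraint $s^p+t^p=2$, normalizing, and reversing the Hahn--Banach step --- is essentially absent from the paper, which asserts the reverse implication without argument; so your write-up is the more complete one, and the norm-pinning step you flag as the main obstacle is handled correctly. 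Finally, your caveat about the borderline case is a genuine and worthwhile observation: for $p=1$, $\lambda=\mu$ one has $C^1_{-\infty}(\lambda,\lambda,X)=2=\frac{(\lambda+\mu)^p}{2^{p-2}(\lambda^p+\mu^p)}$ for \emph{every} Banach space (this is the paper's own Corollary following Proposition \ref{p3}), so $(iii)$ cannot imply $(i)$ there and the theorem must indeed be read under $p>1$ or $\lambda\neq\mu$; the paper does not record this restriction.
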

	\begin{proof}
		Since it has been proven that $J(X)=2$ and $C_{NJ}(X)=2$ are equivalent, see \cite{20}. We now prove that $C_{NJ}(X)=2$ and $C_{-\infty}^{p}(\lambda,\mu,X)=\frac{(\lambda+\mu)^p}{2^{p-2}(\lambda^p+\mu^p)}$
		are also equivalent.\\
		Assume that $C_{NJ}(X)=2$, which means that there exist sequence of
		points ${{x_1}_{n}} \in S_{X}$, ${{x_2}_{n}} \in B_{X}$ such that\\ $$\frac{\|{{x_1}_{n}} +{{x_2}_{n}} \|^2+\|{{x_1}_{n}} -{{x_2}_{n}} \|^2}{2(1+\|{{x_2}_{n}} \|^2)} \rightarrow 2 (n \rightarrow \infty),$$
		at this time, only\\
		$\|{{x_1}_{n}} +{{x_2}_{n}} \| \rightarrow 2 (n \rightarrow \infty)$ , $\|{{x_1}_{n}} -{{x_2}_{n}} \| \rightarrow 2 (n \rightarrow \infty)$ and $\|{{x_2}_{n}} \| \rightarrow  1 (n \rightarrow \infty)$, \\the above  is satisfied.
	    Next we prove that
	    	$$\frac{\min\{\| \lambda {{x_1}_{n}}  + \mu {{x_2}_{n}}  \|^p , \| \mu {{x_1}_{n}}  - \lambda {{x_2}_{n}}  \|^p\}}{2^{p-3} (\lambda^p + \mu^p) (1 + \| {{x_2}_{n}}  \|^p)} \rightarrow \frac{(\lambda+\mu)^p}{2^{p-2}(\lambda^p+\mu^p)}
	        (n \rightarrow \infty).$$
	   Obviously, the $C_{-\infty}^{p}(\lambda,\mu,X)$ constant can ba presented as the following form:
	   $$C_{-\infty}^p(\lambda, \mu, X) = \sup\left\{\frac{\min\{\| \lambda x_1 + \mu x_2 \|^p , \| \mu x_1- \lambda x_2 \|^p\}}{2^{p-3} (\lambda^p + \mu^p) (1 + \| x_2\|^p)} : x_1 \in S_{X},x_2 \in B_{X} \right\}.$$
	   We consider the following two cases.\\
	  \textbf{Case i}: if $\lambda=\mu$, then
	   $$\frac{\min\{\| \lambda {{x_1}_{n}}  + \mu {{x_2}_{n}}  \|^p , \| \mu {{x_1}_{n}}  - \lambda {{x_2}_{n}}  \|^p\}}{2^{p-3} (\lambda^p + \mu^p) (1 + \| {{x_2}_{n}}  \|^p)} = \frac{\min\{\| {{x_1}_{n}}  +  {{x_2}_{n}}  \|^p , \| {{x_1}_{n}}  - {{x_2}_{n}}  \|^p\}}{2^{p-2}  (1 + \| {{x_2}_{n}}  \|^p)} \rightarrow 2 (n \rightarrow \infty).$$ 
	   Hence it is valid.\\
	   \textbf{Case ii}: if $\lambda\neq\mu$, that is $\lambda < \mu$ or $\lambda > \mu$. We now consider the situation where
	   $\lambda > \mu$ without loss of generality (the situation where $\lambda < \mu$ is similar).\\
	   Since
	   $$\|\lambda {{x_1}_{n}} +\mu {{x_2}_{n}} \|\leq \lambda\|{{x_1}_{n}} \|+\mu\|{{x_2}_{n}} \|,$$
	  and $$\|\lambda {{x_1}_{n}} +\mu {{x_2}_{n}} \|=\|\lambda({{x_1}_{n}} -{{x_2}_{n}} )-(\lambda-\mu){{x_2}_{n}} \|\geq\lambda\|{{x_1}_{n}} -{{x_2}_{n}} \|-(\lambda-\mu)\|{{x_2}_{n}} \|.$$
	  Furthermore, we can deduce that $$\lambda\|{{x_1}_{n}} \|+\mu\|{{x_2}_{n}} \|\to \lambda+\mu (n\to\infty)~~\text{and}~~ \lambda\|{{x_1}_{n}} -{{x_2}_{n}} \|-(\lambda-\mu)\|{{x_2}_{n}} \|\to \lambda+\mu (n\to\infty),$$ which implies that $\|\lambda {{x_1}_{n}} +\mu {{x_2}_{n}} \|\to \lambda+\mu (n\to\infty).$
	  \\Then since  $$\|\mu {{x_1}_{n}} -\lambda {{x_2}_{n}} \|\leq \lambda\|{{x_1}_{n}} \|+\mu\|{{x_2}_{n}} \|,$$ and $$\|\mu {{x_1}_{n}} -\lambda {{x_2}_{n}} \|=\|(\lambda+\mu)({{x_1}_{n}} -{{x_2}_{n}} )+\mu {{x_2}_{n}} -\lambda {{x_1}_{n}} \|\geq(\lambda+\mu)\|{{x_1}_{n}} -{{x_2}_{n}} \|-\mu\|{{x_2}_{n}} \|-\lambda\|{{x_1}_{n}} \|.$$Furthermore, we can deduce that $$\lambda\|{{x_1}_{n}} \|+\mu\|{{x_2}_{n}} \|\to \lambda+\mu (n\to\infty)~~\text{and}~~ (\lambda+\mu)\|{{x_1}_{n}} -{{x_2}_{n}} \|-\mu\|{{x_2}_{n}} \|-\lambda\|{{x_1}_{n}} \|\to \lambda+\mu (n\to\infty),$$ which implies that $\|\mu {{x_1}_{n}} -\lambda {{x_2}_{n}} \|\to \lambda+\mu (n\to\infty).$
	  \\Thus we obtain that $$\min\{\| \lambda {{x_1}_{n}}  + \mu {{x_2}_{n}}  \|^p , \| \mu {{x_1}_{n}}  - \lambda {{x_2}_{n}}  \|^p\}\to(\lambda+\mu)^p(n\to\infty).$$
	   Hence,
	   	$$\frac{\min\{\| \lambda {{x_1}_{n}}  + \mu {{x_2}_{n}}  \|^p , \| \mu {{x_1}_{n}}  - \lambda {{x_2}_{n}}  \|^p\}}{2^{p-3} (\lambda^p + \mu^p) (1 + \| {{x_2}_{n}}  \|^p)} \rightarrow \frac{(\lambda+\mu)^p}{2^{p-2}(\lambda^p+\mu^p)}
	   (n \rightarrow \infty).$$
	   Therefore, from the above two cases, $C_{NJ}(X)=2$ if and
	    only if $C_{-\infty}^{p}(\lambda,\mu,X)=\frac{(\lambda+\mu)^p}{2^{p-2}(\lambda^p+\mu^p)}$.\\
	    In conclusion, (i), (ii) and (iii) are equivalent.    
	\end{proof}
	\begin{Theorem}\label{t3}
		Let $X$ be a Banach space. Then$$\frac{[\max\{\lambda,\mu\}]J(X)-|\lambda-\mu|]^{p}}{2^{p-2}(\lambda^{p}+\mu^{p})}\leq C_{-\infty}^{p}(\lambda,\mu,X)\leq\frac{[\min\{\lambda,\mu\}J(X)+|\lambda-\mu|]^{p}}{2^{p-2}(\lambda^{p}+\mu^{p})}.$$
	\end{Theorem}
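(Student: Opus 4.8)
The plan is to prove the two inequalities separately. In both halves one may assume without loss of generality that $\lambda\le\mu$, so that $\min\{\lambda,\mu\}=\lambda$, $\max\{\lambda,\mu\}=\mu$ and $|\lambda-\mu|=\mu-\lambda$: substituting $x_2\mapsto-x_2$ in the defining supremum gives $C_{-\infty}^{p}(\lambda,\mu,X)=C_{-\infty}^{p}(\mu,\lambda,X)$, and the two displayed bounds are already symmetric in $\lambda,\mu$. The workhorses are the two linear identities $\lambda x_1+\mu x_2=\lambda(x_1+x_2)+(\mu-\lambda)x_2$ and $\mu x_1-\lambda x_2=\lambda(x_1-x_2)+(\mu-\lambda)x_1$, together with the following normalization of the James constant, which I would record first: $\min\{\|u+v\|,\|u-v\|\}\le J(X)\max\{\|u\|,\|v\|\}$ for all $u,v\in X$. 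By homogeneity and the $u\leftrightarrow v$ symmetry this reduces to $\|u\|=1$, $v=tz$ with $z\in S_X$, $0\le t\le1$, and then convexity of the norm gives $\|u\pm tz\|\le(1-t)\|u\|+t\|u\pm z\|$, whence $\min\{\|u+tz\|,\|u-tz\|\}\le(1-t)+tJ(X)\le J(X)$.

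For the upper bound I would use the representation $C_{-\infty}^{p}(\lambda,\mu,X)=\sup\{\min\{\|\lambda x_1+\mu tx_2\|^{p},\|\mu x_1-\lambda tx_2\|^{p}\}/(2^{p-3}(\lambda^{p}+\mu^{p})(1+t^{p})):x_1,x_2\in S_X,\ 0\le t\le1\}$ stated after the definition. Putting $tx_2$ into the identities and using the triangle inequality gives $\|\lambda x_1+\mu tx_2\|\le\lambda\|x_1+tx_2\|+(\mu-\lambda)t$ and $\|\mu x_1-\lambda tx_2\|\le\lambda\|x_1-tx_2\|+(\mu-\lambda)$; since $(\mu-\lambda)t\le\mu-\lambda$, the elementary inequality $\min\{a+c,b+d\}\le\min\{a,b\}+\max\{c,d\}$ together with the normalization lemma (note $\|tx_2\|=t\le1$) yields $\min\{\|\lambda x_1+\mu tx_2\|,\|\mu x_1-\lambda tx_2\|\}\le\lambda J(X)+(\mu-\lambda)=\min\{\lambda,\mu\}J(X)+|\lambda-\mu|$. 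Raising to the $p$-th power and dividing by the normalizing factor then gives the upper estimate.

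For the lower bound, fix $\varepsilon>0$ and choose $u,v\in S_X$ with $\min\{\|u+v\|,\|u-v\|\}>J(X)-\varepsilon$. Take $x_1=u$, $x_2=v$, so that $\|x_1\|^{p}+\|x_2\|^{p}=2$ and the representation of $C_{-\infty}^{p}$ as a supremum over $\|x_1\|^{p}+\|x_2\|^{p}=2$ applies. The reverse triangle inequality, applied to $\lambda u+\mu v=\mu(u+v)-(\mu-\lambda)u$ and to $\mu u-\lambda v=\mu(u-v)+(\mu-\lambda)v$, gives $\|\lambda u+\mu v\|\ge\mu\|u+v\|-(\mu-\lambda)$ and $\|\mu u-\lambda v\|\ge\mu\|u-v\|-(\mu-\lambda)$, hence $\min\{\|\lambda u+\mu v\|,\|\mu u-\lambda v\|\}\ge\mu(J(X)-\varepsilon)-(\mu-\lambda)$. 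As $J(X)\ge\sqrt2$, this quantity is positive once $\varepsilon$ is small, so raising to the $p$-th power is order-preserving and $C_{-\infty}^{p}(\lambda,\mu,X)\ge[\mu(J(X)-\varepsilon)-(\mu-\lambda)]^{p}/(2^{p-2}(\lambda^{p}+\mu^{p}))$; letting $\varepsilon\to0$ gives the lower estimate.

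The step I expect to be the main obstacle is pinning the constant in the upper bound down to exactly $2^{p-2}(\lambda^{p}+\mu^{p})$. The estimates above more naturally bound $\min\{\|\lambda x_1+\mu x_2\|,\|\mu x_1-\lambda x_2\|\}$ by $(\min\{\lambda,\mu\}J(X)+|\lambda-\mu|)\max\{\|x_1\|,\|x_2\|\}$, and under $\|x_1\|^{p}+\|x_2\|^{p}=2$ one gets $\max\{\|x_1\|,\|x_2\|\}^{p}\le 2$ for free, which loses a factor $2$; squeezing this out (e.g.\ by showing the supremum is already attained with $\|x_1\|=\|x_2\|=1$) is the delicate point, and it is here that an assumption of the form $p\ge2$ is needed. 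Everything else — the lower bound, the two linear identities, and the normalization lemma for $J(X)$ — is routine.
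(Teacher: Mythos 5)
Your lower bound is complete and is essentially the paper's own argument: the same decomposition of $\lambda x_1+\mu x_2$ and $\mu x_1-\lambda x_2$ around $\max\{\lambda,\mu\}(x_1\pm x_2)$, the reverse triangle inequality, and an almost-optimal pair for $J(X)$. Your upper bound likewise uses the same two identities as the paper, plus a correct normalization lemma for $J(X)$, and it does establish the stated estimate for pairs with $\|x_1\|=\|x_2\|=1$. Up to that point the proposal and the paper coincide.

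The obstacle you flag at the end, however, is a genuine gap --- and it is a gap in the theorem as stated, not merely in your route. The paper's proof of the right-hand inequality silently restricts to $x_1,x_2\in S_X$ (the case $t=1$ in the parametrization) and never addresses the pairs with $\|x_2\|<\|x_1\|$ over which the supremum actually ranges, so it proves no more than you do. Worse, the degenerate pair $x_2=0$ shows the upper bound is false for small $p$: for $p=1$ and $\lambda=\mu=1$ the paper's Corollary 3.5 gives $C^1_{-\infty}(1,1,X)=2$ for every $X$, while the claimed upper bound evaluates to $J(X)$, which is strictly less than $2$ whenever $X$ is uniformly non-square (e.g.\ a Hilbert space, where $J(X)=\sqrt2$). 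So your suspicion is exactly right: some hypothesis such as $p\ge2$ (under which $2^{1/p}\le\sqrt2\le J(X)$ at least rescues the case $t=0$) is necessary, and even then your chain of estimates --- like the paper's --- leaves the intermediate case $0<t<1$ unresolved, since it only yields the bound with $2^{p-3}(1+t^p)$ in place of $2^{p-2}$ in the denominator. The left-hand inequality is correct and fully proved by your argument.
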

	\begin{proof}
		On the one hand, for any $x_1,x_2\in S_X$, since	$$\begin{aligned}\min\{\|\lambda x_1+\mu x_2\|^p,\|\mu x_1-\lambda x_2\|^p\}\leq2^{p-2}(\lambda^{p}+\mu^{p})C_{-\infty}^{(p)}(\lambda,\mu,X),\end{aligned}$$
		this is equivalent to $$\min\{\|\lambda x_1+\mu x_2\|,\|\mu x_1-\lambda x_2\|\}\leq\sqrt[p]{2^{p-2}\left(\lambda^{p}+\mu^{p}\right)C_{-\infty}^{p}(\lambda,\mu,X)}.$$
		Then by
		$$\|\lambda x_1+\mu x_2\|\geq\max\{\lambda,\mu\}\|x_1+x_2\|-|\lambda-\mu|$$
		and
		$$\|\mu x_1-\lambda x_2\|\geq\max\left\{\lambda,\mu\right\}\|x_1-x_2\|-|\lambda-\mu|,$$
		we obtain that
		$$\begin{aligned}&\sqrt[p]{2^{p-2}\left(\lambda^{p}+\mu^{p}\right)C_{-\infty}^{p}(\lambda,\mu,X)}\\\geq&\min\{\max\{\lambda,\mu\}\|x_1+x_2\|-|\lambda-\mu|,\max\{\lambda,\mu\}\|x_1-x_2\|-|\lambda-\mu|\}\\=&\max\{\lambda,\mu\}J(X)-|\lambda-\mu|,\end{aligned}$$
		which means that $$C_{-\infty}^{p}(\lambda,\mu,X)\geq\frac{[\max\{\lambda,\mu\}J(X)-|\lambda-\mu|]^{p}}{2^{p-2}\left(\lambda^p+\mu^p\right)}.$$
		Conversly, since
		$$\|\lambda x_1+\mu x_2\|\leq\min\{\lambda,\mu\}\|x_1+x_2\|+|\lambda-\mu|,$$
		and
		$$\|\mu x_1-\lambda x_2\|\leq\min\{\lambda,\mu\}\|x_1-x_2\|+|\lambda-\mu|.$$
		We obtain that $$\begin{aligned}&\frac{\min\{\| \lambda x_1 + \mu x_2 \|^p , \| \mu x_1- \lambda x_2 \|^p\}}{2^{p-2}(\zeta^{p}+v^{p})}\\\leq&\frac{[\min\{\min\{\lambda,\mu\}\|x_1+x_2\|+|\lambda-\mu|,\min\{\lambda,\mu\}\|x_1-x_2\|+|\lambda-\mu\|]^{p}}{2^{p-1}(\lambda^{p}+\mu^{p})}\\=&\frac{[\min\{\lambda,\mu\}]J(X)+|\lambda-\mu|^{p}}{2^{p-2}(\lambda^{p}+\mu^{p})}.\end{aligned}$$
		This implies that $$C_{-\infty}^{p}(\lambda,\mu,X)\leq\frac{[\min\{\lambda,\mu\}J(X)+|\lambda-\mu|]^{p}}{2^{p-2}(\lambda^{p}+\mu^{p})}.$$
	\end{proof}
	\begin{Corollary}\label{c1}
		$X$ is uniformly non-square if and only if  $$C_{-\infty}^{p}(\lambda,\mu,X)<\frac{(\lambda+\mu)^p}{2^{p-2}(\lambda^p+\mu^p)}.$$
	\end{Corollary}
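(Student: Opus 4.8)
The plan is to read this off directly from Theorem~\ref{t3}, using the standard fact that $X$ is uniformly non-square precisely when $J(X)<2$ (and that $J(X)\le 2$ always), together with the two elementary identities
$$2\min\{\lambda,\mu\}+|\lambda-\mu|=\lambda+\mu,\qquad 2\max\{\lambda,\mu\}-|\lambda-\mu|=\lambda+\mu,$$
valid for all $\lambda,\mu>0$.

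First I would handle the forward implication. Assume $X$ is uniformly non-square, so $J(X)<2$. Since $\min\{\lambda,\mu\}>0$,
$$\min\{\lambda,\mu\}J(X)+|\lambda-\mu|<2\min\{\lambda,\mu\}+|\lambda-\mu|=\lambda+\mu,$$
and both sides are positive, so raising to the $p$-th power preserves the strict inequality. Substituting into the upper estimate of Theorem~\ref{t3} gives
$$C_{-\infty}^{p}(\lambda,\mu,X)\le\frac{[\min\{\lambda,\mu\}J(X)+|\lambda-\mu|]^{p}}{2^{p-2}(\lambda^{p}+\mu^{p})}<\frac{(\lambda+\mu)^{p}}{2^{p-2}(\lambda^{p}+\mu^{p})}.$$

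For the converse I would argue by contraposition: suppose $X$ is not uniformly non-square, i.e.\ $J(X)=2$. Then
$$\max\{\lambda,\mu\}J(X)-|\lambda-\mu|=2\max\{\lambda,\mu\}-|\lambda-\mu|=\lambda+\mu>0,$$
so the lower estimate of Theorem~\ref{t3} yields
$$C_{-\infty}^{p}(\lambda,\mu,X)\ge\frac{[\max\{\lambda,\mu\}J(X)-|\lambda-\mu|]^{p}}{2^{p-2}(\lambda^{p}+\mu^{p})}=\frac{(\lambda+\mu)^{p}}{2^{p-2}(\lambda^{p}+\mu^{p})},$$
which contradicts the hypothesis $C_{-\infty}^{p}(\lambda,\mu,X)<\frac{(\lambda+\mu)^p}{2^{p-2}(\lambda^p+\mu^p)}$; alternatively, this direction follows immediately from the equivalence $(i)\Leftrightarrow(iii)$ in the preceding theorem. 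Combining the two implications proves the corollary.

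I do not expect any real obstacle: the only points requiring care are verifying the two $\min/\max$ identities and noting that the quantities being raised to the $p$-th power are nonnegative, so that $t\mapsto t^{p}$ is monotone on the relevant range — both entirely routine. The substantive work has already been carried out in Theorem~\ref{t3}.
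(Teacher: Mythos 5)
Your proposal is correct and follows essentially the same route as the paper: both directions are read off from the two-sided estimate of Theorem~\ref{t3} together with the characterization of uniform non-squareness by $J(X)<2$. The only cosmetic difference is that you phrase the converse as a contraposition from $J(X)=2$ and make explicit the identities $2\min\{\lambda,\mu\}+|\lambda-\mu|=2\max\{\lambda,\mu\}-|\lambda-\mu|=\lambda+\mu$, which the paper leaves as ``a simple computation.''
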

	\begin{proof}
		If $X$ is uniformly non-square, then from what we have shown before, we know that $J(X)<2$ holds. This means that $$\frac{[\min\{\lambda,\mu\}J(X)+|\lambda-\mu|]^{p}}{2^{p-2}(\lambda^{p}+\mu^{p})}< \frac{(\lambda+\mu)^p}{2^{p-2}(\lambda^p+\mu^p)}.$$
		Thus by Theorem \ref{t3}, we obtain that$$C_{-\infty}^{p}(\lambda,\mu,X)<\frac{(\lambda+\mu)^p}{2^{p-2}(\lambda^p+\mu^p)}.$$
		
		Conversely, by Theorem \ref{t3}, we have $$\frac{[\max\{\lambda,\mu\}]J(X)-|\lambda-\mu|]^{p}}{2^{p-2}(\lambda^{p}+\mu^{p})}\leq C_{-\infty}^{p}(\lambda,\mu,X)<\frac{(\lambda+\mu)^p}{2^{p-2}(\lambda^p+\mu^p)}.$$ For$$ \frac{[\max\{\lambda,\mu\}]J(X)-|\lambda-\mu|]^{p}}{2^{p-2}(\lambda^{p}+\mu^{p})}<\frac{(\lambda+\mu)^p}{2^{p-2}(\lambda^p+\mu^p)},$$  with a simple computation, we can deduce that $J(X)<2$. Since $J(X)<2$ and $X$ is uniformly non-square are coincide, therefore when $C_{-\infty}^{p}(\lambda,\mu,X)<\frac{(\lambda+\mu)^p}{2^{p-2}(\lambda^p+\mu^p)}$ holds, $X$ is uniformly non-square.
		
		This completes the proof.
	\end{proof}
\section{4. Banach–Mazur distance and stability}
For isomorphic Banach spaces $X$ and $Y$, the Banach-Mazur distance $d(X,Y)$ is the infimum of $\|T\|\|T^{-1}\|$ over all isomorphisms $T$ mapping $X$ onto $Y$.
\begin{Theorem}\label{t4}
If $X$ and $Y$ are isomorphic Banach spaces. Then$$\frac{C^p_{-\infty}(\lambda,\mu,X)}{d(X,Y)^p}\leq C^p_{-\infty}(\lambda,\mu,Y)\leq C^p_{-\infty}(\lambda,\mu,X)d(X,Y)^p.$$
\end{Theorem}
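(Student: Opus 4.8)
The strategy is the routine Banach--Mazur transport argument: bound the defining quotient for $Y$ by the corresponding quotient for $X$ through an arbitrary isomorphism, then optimize over the isomorphism. First I would fix an isomorphism $T\colon X\to Y$ and record the two-sided norm estimate
$\|T^{-1}\|^{-1}\|x\|\le\|Tx\|\le\|T\|\,\|x\|$ for all $x\in X$ (equivalently $\|T\|^{-1}\|y\|\le\|T^{-1}y\|\le\|T^{-1}\|\,\|y\|$ for all $y\in Y$). Then, given any $y_1,y_2\in Y$ with $(y_1,y_2)\neq(0,0)$, I would set $x_i=T^{-1}y_i$ and use linearity of $T$ to write $\lambda y_1+\mu y_2=T(\lambda x_1+\mu x_2)$ and $\mu y_1-\lambda y_2=T(\mu x_1-\lambda x_2)$.

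Applying the upper estimate to the numerator gives
$\min\{\|\lambda y_1+\mu y_2\|^p,\|\mu y_1-\lambda y_2\|^p\}\le\|T\|^{p}\min\{\|\lambda x_1+\mu x_2\|^p,\|\mu x_1-\lambda x_2\|^p\}$,
while applying the lower estimate to each $\|y_i\|=\|Tx_i\|\ge\|T^{-1}\|^{-1}\|x_i\|$ gives
$\|y_1\|^p+\|y_2\|^p\ge\|T^{-1}\|^{-p}(\|x_1\|^p+\|x_2\|^p)$. Dividing, the factor $2^{p-3}(\lambda^p+\mu^p)$ cancels and one obtains
$$\frac{\min\{\|\lambda y_1+\mu y_2\|^p,\|\mu y_1-\lambda y_2\|^p\}}{2^{p-3}(\lambda^p+\mu^p)(\|y_1\|^p+\|y_2\|^p)}\le\bigl(\|T\|\,\|T^{-1}\|\bigr)^{p}\,\frac{\min\{\|\lambda x_1+\mu x_2\|^p,\|\mu x_1-\lambda x_2\|^p\}}{2^{p-3}(\lambda^p+\mu^p)(\|x_1\|^p+\|x_2\|^p)}\le\bigl(\|T\|\,\|T^{-1}\|\bigr)^{p}C^p_{-\infty}(\lambda,\mu,X).$$
Taking the supremum over all admissible $(y_1,y_2)$ yields $C^p_{-\infty}(\lambda,\mu,Y)\le(\|T\|\,\|T^{-1}\|)^p\,C^p_{-\infty}(\lambda,\mu,X)$, and since $T$ was an arbitrary isomorphism, taking the infimum over all such $T$ gives $C^p_{-\infty}(\lambda,\mu,Y)\le d(X,Y)^p\,C^p_{-\infty}(\lambda,\mu,X)$, which is the right-hand inequality.

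For the left-hand inequality I would simply interchange the roles of $X$ and $Y$: the same computation gives $C^p_{-\infty}(\lambda,\mu,X)\le d(Y,X)^p\,C^p_{-\infty}(\lambda,\mu,Y)$, and since the Banach--Mazur distance is symmetric, $d(Y,X)=d(X,Y)$, so dividing by $d(X,Y)^p$ produces $C^p_{-\infty}(\lambda,\mu,X)/d(X,Y)^p\le C^p_{-\infty}(\lambda,\mu,Y)$. There is no serious obstacle here; the only points requiring care are keeping the directions of the inequalities straight (the numerator uses the upper norm bound for $T$, the denominator the lower bound), the fact that the inequality $C^p_{-\infty}(\lambda,\mu,Y)\le(\|T\|\|T^{-1}\|)^pC^p_{-\infty}(\lambda,\mu,X)$ holds for \emph{each} isomorphism so that one may pass to the infimum $d(X,Y)$, and invoking symmetry of $d$ for the second half.
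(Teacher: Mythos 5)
Your proposal is correct and follows essentially the same Banach--Mazur transport argument as the paper: bound the defining quotient in one space by $(\|T\|\,\|T^{-1}\|)^p$ times the quotient in the other, optimize over isomorphisms, and symmetrize. The only (cosmetic) differences are that you pull points back from $Y$ and take the infimum over all $T$ at the end, whereas the paper pushes normalized points forward under a near-optimal $T$ chosen via an $\epsilon$-argument.
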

\begin{proof}
Given $x_1, x_2 \in S_X$, for any $\epsilon > 0$, there is an isomorphism $T: X \to Y$ satisfying $\|T\|\|T^{-1}\| \leq (1 + \epsilon)d(X,Y)$. Set
$$x_1'=\frac{Tx_1}{\|T\|},\:x_2'=\frac{Tx_2}{\|T\|}.$$
Then $x_1^\prime,x_2^{\prime}\in B_Y$, since
$$\|x_1'\|=\frac{\|Tx_1\|}{\|T\|}\leq\|x_1\|=1,\:\|x_2'\|=\frac{\|Tx_2\|}{\|T\|}\leq\|x_2\|=1,$$
hence we obtain
$$\begin{aligned}\frac{\min\{\|\lambda x_1+\mu tx_2\|^{p},\|\mu x_1-\lambda tx_2\|^{p}\}}{2^{p-3}(\lambda^p+\mu^p)(1+t^{p})}&=\frac{\|T\|^{p}\min\{\|T^{-1}(\lambda x_1^{\prime}+\mu tx_2^{\prime})\|^{p},\|T^{-1}(\mu x_1^{\prime}-\lambda tx_2^{\prime})\|^{p}\}}{2^{p-3}(\lambda^p+\mu^p)(1+t^{p})}\\&\leq\frac{(1+\epsilon)^{p}d(X,Y)^{p}\min\{\|\lambda x_1^{\prime}+\mu tx_2^{\prime}\|^{p},\|\mu x_1^{\prime}-\lambda tx_2^{\prime}\|^{p}\}}{2^{p-3}(\lambda^p+\mu^p)(1+t^{p})}\\&\leq(1+\epsilon)^{p}d(X,Y)^{p}C^p_{-\infty}(\lambda,\mu,Y).\end{aligned}$$
Since $x_1,x_2\in S_X$ and $\epsilon>0$ are arbitrary, it follows that $$C^p_{-\infty}(\lambda,\mu,X)\leq d(X,Y)^pC^p_{-\infty}(\lambda,\mu,Y).$$
The second inequality follows by simply interchanging $X$ and $Y.$ 
		\end{proof}
\begin{Corollary}\label{c2}
Consider a nontrivial Banach space $X$ and let $X_{1} = (X, \| \cdot \|_{1})$, where $\| \cdot \|_{1}$ is an equivalent norm on $X$ such that for $a, b > 0$ and $x \in X$, we have  
\[
a \| x \| \leq \| x \|_{1} \leq b \| x \|.
\]
	Then $$\frac {a^{p}}{b^{p}}C^p_{- \infty }(\lambda,\mu, X) \leq C^p_{- \infty }( \lambda,\mu,X_{1}) \leq \frac {b^{p}}{a^{p}}C^p_{- \infty }(\lambda,\mu, X) .$$
\end{Corollary}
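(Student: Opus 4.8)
The plan is to deduce Corollary~\ref{c2} directly from Theorem~\ref{t4} by showing that the Banach--Mazur distance between $X$ and $X_1$ is controlled by $b/a$. First I would observe that the formal identity map $I:(X,\|\cdot\|)\to(X,\|\cdot\|_1)$ is an isomorphism: the inequality $\|x\|_1\le b\|x\|$ gives $\|I\|\le b$, and the inequality $a\|x\|\le\|x\|_1$ rewritten as $\|I^{-1}y\|\le a^{-1}\|y\|_1$ gives $\|I^{-1}\|\le 1/a$. Hence
\[
d(X,X_1)\le\|I\|\,\|I^{-1}\|\le\frac{b}{a}.
\]

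Next I would simply feed this bound into Theorem~\ref{t4}. Applying that theorem with the pair $(X,X_1)$ in place of $(X,Y)$ yields
\[
\frac{C^p_{-\infty}(\lambda,\mu,X)}{d(X,X_1)^p}\le C^p_{-\infty}(\lambda,\mu,X_1)\le C^p_{-\infty}(\lambda,\mu,X)\,d(X,X_1)^p,
\]
and then substituting $d(X,X_1)\le b/a$ into the right-hand inequality gives the upper bound
\[
C^p_{-\infty}(\lambda,\mu,X_1)\le\frac{b^p}{a^p}C^p_{-\infty}(\lambda,\mu,X).
\]
For the lower bound I would note that the roles of $X$ and $X_1$ are symmetric: from $a\|x\|\le\|x\|_1\le b\|x\|$ one gets $b^{-1}\|x\|_1\le\|x\|\le a^{-1}\|x\|_1$, so the same argument applied in the reverse direction gives $C^p_{-\infty}(\lambda,\mu,X)\le(b/a)^p\,C^p_{-\infty}(\lambda,\mu,X_1)$, which rearranges to $\tfrac{a^p}{b^p}C^p_{-\infty}(\lambda,\mu,X)\le C^p_{-\infty}(\lambda,\mu,X_1)$. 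Combining the two gives the claimed chain of inequalities.

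There is essentially no hard part here; the only thing requiring a little care is making sure the direction of each norm inequality is translated correctly into a bound on $\|I\|$ versus $\|I^{-1}\|$, and confirming that Theorem~\ref{t4} is stated for \emph{any} pair of isomorphic spaces so that it applies to $(X,X_1)$. If one prefers a self-contained argument not invoking Theorem~\ref{t4}, one can instead mimic its proof directly: given $x_1,x_2\in S_{X_1}$ and $0\le t\le1$, pass to the $\|\cdot\|$-normalized vectors, use $a\|\cdot\|\le\|\cdot\|_1\le b\|\cdot\|$ to compare the numerators $\min\{\|\lambda x_1+\mu tx_2\|_1^p,\|\mu x_1-\lambda tx_2\|_1^p\}$ with their $\|\cdot\|$-counterparts up to the factor $(b/a)^p$, and take the supremum; but invoking Theorem~\ref{t4} with the estimate $d(X,X_1)\le b/a$ is the cleaner route and is the one I would present.
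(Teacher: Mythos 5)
Your proposal is correct and follows essentially the same route as the paper: the paper's proof is exactly ``this follows from Theorem~\ref{t4} and the fact that $d(X,X_1)\le \frac{b}{a}$,'' and you have simply supplied the (correct) details of estimating $\|I\|\,\|I^{-1}\|$ for the identity map and applying the theorem in both directions.
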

\begin{proof}
	This follows from the above Theorem \ref{t4} and the fact that $d(X,X_1)<\frac{b}{a}$.
\end{proof}
\begin{Remark}
	Corollary \ref{c2} can  also be understand in the following way.
	Let $\Vert \cdot\Vert_1$ and $\Vert \cdot\Vert_2$ be two equivalent norms 
	defined on the vector space $X$ such that for every $x\in X$,
	$$\Vert x\Vert_2\leq \Vert x\Vert_1\leq b\Vert x\Vert_2.$$
	Then  $C^p_{-\infty}(\lambda,\mu,X_1)\leq b^pC^p_{-\infty}(\lambda,\mu,X_2)$.

	For $x_1,x_2\in X$ and $x,y$ nor both zero, we have
	$$\begin{aligned}&\min \bigg\{\frac{\Vert \lambda x+\mu y\Vert_1^p}{2^{p-3}(\lambda^p+\mu^p)(\Vert x\Vert_1^p+\Vert y\Vert_1^p) }, ~~\frac{\Vert \mu x-\lambda y\Vert_1^p}{2^{p-3}(\lambda^p+\mu^p)(\Vert x\Vert_1^p+\Vert y\Vert_1^p)}\bigg\}\\\leq &\min \bigg\{\frac{b^p\Vert\lambda x+\mu y\Vert^p_2}{2^{p-3}(\lambda^p+\mu^p)(\Vert x\Vert_2^p+\Vert y\Vert_2^p) }, ~~\frac{b^p\Vert \mu x-\lambda y\Vert_2^p}{2^{p-3}(\lambda^p+\mu^p)(\Vert x\Vert_2^p+\Vert y\Vert_2^p)}\bigg\}\\\leq& b^pC^p_{-\infty}(\lambda,\mu,X_2).\end{aligned}$$
	Taking the supremum of the left-hand side of this inequality and the claim follows.
\end{Remark}
\section{5. The coefficient of weak orthogonality }
In 1948, Brodskii and Milman \cite{11} introduced the following geometric concepts:

\begin{definition}A Banach space $X$ is defined to have normal structure if, for every non-singleton closed, bounded, and convex subset $K \subseteq X$, the Chebyshev radius $r(K)$ is strictly smaller than the diameter $\operatorname{diam}(K)$. Specifically, the diameter $\operatorname{diam}(K)$ is given by  
	\[
	\operatorname{diam}(K) = \sup \{\|x - y\| : x, y \in K\},
	\]  
	and the Chebyshev radius $r(K)$ is defined as  
	\[
	r(K) = \inf \{\sup \{\|x - y\| : y \in K\} : x \in K\}.
	\]
\end{definition}
A Banach space \( X \) is said to have weak normal structure if every weakly compact convex set \( K \subseteq X \) containing more than one point possesses normal structure. Normal structure is crucial in the fixed point theory of nonexpansive mappings, as a reflexive Banach space with normal structure guarantees the fixed point property for such mappings (see \cite{12}).

In \cite{13}, Sims introduced the concept of WORTH for Banach spaces. A Banach space $X$ is said to have the WORTH property if  
\[
\limsup_{n\to\infty} \left| \|x_n + x\| - \|x_n - x\| \right| = 0
\]  
for every weakly null sequence $\{x_n\}$ in $X$ and for all $x \in X$.

In \cite{14}, Sims defined the parameter  
\[
\omega(X) = \inf \left\{ \lambda > 0 : \lambda \liminf_{n \to \infty} \|x_n + x\| \leq \liminf_{n \to \infty} \|x_n - x\| \right\},
\]  
with the infimum taken over all weakly null sequences $\{x_n\}$ in $X$ and all $x \in X$. It was proven that $1 \leq \omega(X) \leq 3$ for any Banach space $X$.

Let $\{x_n\}$ be a bounded sequence in $X$. The asymptotic radius $r(C, \{x_n\})$ and the asymptotic center $A(C, \{x_n\})$ of $\{x_n\}$ in a subset $C \subseteq X$ are defined as follows:  
\[
r(C, \{x_n\}) = \inf \left\{\limsup_{n} \|x_n - x\| : x \in C\right\},
\]  
and  
\[
A(C, \{x_n\}) = \left\{x \in C : \limsup_{n} \|x_n - x\| = r(C, \{x_n\})\right\},
\]
respectively. It is well known that $A\left(C,\left\{{{x}_{n}} \right\}\right)$ is a nonempty weakly compact convex set whenever $C$ is. The sequence $\left\{{{x}_{n}} \right\}$ is called regular with respect to $C$ if $r\left(C,\left\{{{x}_{n}} \right\}\right)=r\left(C,\left\{x_{n_i}\right\}\right)$ for all subsequences $\left\{x_{n_i}\right\}$ of $\left\{{{x}_{n}} \right\}$. If $D$ is a bounded subset of $X$, the Chebyshev radius of $D$ relative to $C$ is defined by
$$
r_C(D)=\inf _{x_1 \in C} \sup _{x_2 \in D}\|x_1-x_2\|.
$$
The (DL)-condition is defined as follows:
\begin{definition}
If there exists a $\kappa \in [0, 1)$ such that for any weakly compact convex subset $C \subseteq X$ and any bounded sequence $\{x_n\}$ in $C$ that is regular with respect to $C$, we have  
\[
r_C(A(C, \{x_n\})) \leq \kappa r(C, \{x_n\}),
\]  
where $r_C$ denotes the radius within $C$.
\end{definition}
In \cite{15}(Theorem 3), we can get the following lemma. 
\begin{Lemma}\label{l10}
Let $a\in C$, then for every $\varepsilon>0$, there exists $M \in \mathbb{N}$ such that
	
	(i) $\left\|x_M-a\right\| \leq r\left(C,\left\{{{x}_{n}} \right\}\right)+\varepsilon$,
	
	(ii) $\left\|x_M-2 x+a\right\|\leq  \omega(r\left(C,\left\{{{x}_{n}} \right\}\right)+\varepsilon)$,
	
	(iii) $\left\|x_M-\left(\frac{2}{\omega^2+1} x+\frac{\omega^2-1}{\omega^2+1} a\right)\right\| \geq r\left(C,\left\{{{x}_{n}} \right\}\right)-\varepsilon$,
	
	(iv)
	$\left\|\left(\omega^2-1\right)\left(x_M -x\right)-\left(\omega^2+1\right)(a-x)\right\| \geq\left(\omega^2+1\right)\|a-x\|\left(\frac{r\left(C,\left\{{{x}_{n}} \right\}\right)-\varepsilon}{r\left(C,\left\{{{x}_{n}} \right\}\right)}\right) .$
\end{Lemma}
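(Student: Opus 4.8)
\medskip\noindent\emph{Proof idea.}\quad I will argue in the standing situation behind \cite{15}: $C$ is a weakly compact convex subset of $X$, $\{x_n\}\subseteq C$ is regular with respect to $C$, $r:=r(C,\{x_n\})$ (which we may assume is positive, the case $r=0$ being trivial), $\omega:=\omega(X)$, the point $a$ lies in the asymptotic center $A(C,\{x_n\})$, and $x$ is the weak limit of $\{x_n\}$ (pass once to a subsequence, $C$ being weakly compact); thus $x\in C$ and $\{x_n-x\}$ is weakly null. The proof will rest on four elementary facts: (a) $\limsup_n\|x_n-a\|=r$, because $a$ lies in the asymptotic center; (b) $\limsup_n\|x_n-z\|\ge r$ for every $z\in C$, because $r$ is an infimum over $C$; (c) the norm of $X$ is weakly lower semicontinuous; (d) the comparison property of $\omega(X)$: for every weakly null $\{y_n\}$ and every $v\in X$ one has $\liminf_n\|y_n+v\|\le\omega\liminf_n\|y_n-v\|$, and (replacing $v$ by $-v$) also $\liminf_n\|y_n-v\|\le\omega\liminf_n\|y_n+v\|$. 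The plan is to establish each of (i)--(iv) along one common subsequence, for all large indices in the cases (i), (ii) and (iv) and for infinitely many indices in case (iii), and then to extract a single $M$.

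Statements (i) and (iii) are the soft ones. Inequality (i) is immediate from fact (a): $\|x_n-a\|<r+\varepsilon$ once $n$ is large. For (iii), set $y:=\frac{2}{\omega^{2}+1}x+\frac{\omega^{2}-1}{\omega^{2}+1}a$; its two coefficients are nonnegative (since $\omega\ge 1$) and sum to $1$, so $y$ is a convex combination of $x,a\in C$ and hence $y\in C$. Fact (b) then gives $\limsup_n\|x_n-y\|\ge r$, so $\|x_n-y\|>r-\varepsilon$ for infinitely many $n$; it is precisely regularity that makes this last statement survive the passage to any subsequence.

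For (ii) I would apply fact (d) to the weakly null sequence $y_n:=x_n-x$ with the vector $v:=a-x$: since $y_n+v=x_n-2x+a$ and $y_n-v=x_n-a$, fact (d) combined with fact (a) gives $\liminf_n\|x_n-2x+a\|\le\omega\liminf_n\|x_n-a\|\le\omega r$, and passing to a subsequence along which $\|x_n-2x+a\|$ converges turns this into $\|x_n-2x+a\|\le\omega(r+\varepsilon)$ for all large $n$. For (iv), observe that $(\omega^{2}-1)(x_n-x)-(\omega^{2}+1)(a-x)$ converges weakly to $-(\omega^{2}+1)(a-x)$ because $x_n-x\rightharpoonup 0$, so fact (c) forces $\liminf_n\|(\omega^{2}-1)(x_n-x)-(\omega^{2}+1)(a-x)\|\ge(\omega^{2}+1)\|a-x\|$, which already majorizes $(\omega^{2}+1)\|a-x\|\frac{r-\varepsilon}{r}$; hence (iv) holds for all large $n$ too.

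It remains to assemble the pieces: after the single passage to the weak-limit subsequence and the further passage used for (ii), estimates (i), (ii) and (iv) hold at every sufficiently large index, while (iii) still holds at infinitely many of them, so one may choose $M$ among the indices witnessing (iii) that are also large enough for the other three. The step I expect to be the main obstacle is exactly this bookkeeping: one must convert the two $\liminf$-type estimates into genuine ``for all large $n$'' statements by thinning the sequence, while keeping $r(C,\{x_n\})$, the weak limit $x$, and the membership $a\in A(C,\{x_n\})$ unchanged — and the invariance of these three objects under passing to subsequences is where the regularity hypothesis and the definition of the asymptotic center do their work.
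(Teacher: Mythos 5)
The paper never proves this lemma --- it is simply imported from \cite{15} (Theorem 3) --- so there is no in-text argument to compare yours against; what you have written is a correct reconstruction of the standard asymptotic-center argument. Two of your interpretive choices deserve emphasis, because the statement as printed is not self-contained: $x$ must be the weak limit of (a subsequence of) $\{x_n\}$, and $a$ must lie in the asymptotic center $A(C,\{x_n\})$ rather than merely in $C$; for a general $a\in C$ part (i) is false, since one only knows $\limsup_n\|x_n-a\|\geq r$, not $\leq r$. With those readings your four steps are all sound: (i) from $\limsup_n\|x_n-a\|=r$; (iii) from convexity of $C$ (here $\omega\geq 1$ is needed for the coefficients to be nonnegative) together with $r$ being an infimum over $C$; (ii) from the defining inequality of $\omega(X)$ applied to the weakly null sequence $x_n-x$ and the vector $a-x$ (note the set of admissible $\lambda$ in that definition is closed, so $\lambda=\omega$ itself may be used); and (iv) from weak lower semicontinuity of the norm, which in fact gives the stronger bound $(\omega^2+1)\|a-x\|$ without the factor $(r-\varepsilon)/r$. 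Your bookkeeping is also right: after thinning so that (ii) holds eventually, regularity keeps $r(C,\{x_{n_i}\})=r$, so (iii) still holds at infinitely many indices of the thinned sequence and a common $M$ exists. One caveat worth recording: the paper's displayed definition of $\omega(X)$ (an infimum over $\lambda$ with $\lambda\liminf\|x_n+x\|\leq\liminf\|x_n-x\|$) is degenerate as written; your fact (d) uses the intended inequality $\liminf\|x_n+x\|\leq\lambda\liminf\|x_n-x\|$, which is the one consistent with $1\leq\omega(X)\leq 3$ and with the use made of the lemma in Theorem 5.8.
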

\begin{Theorem}\label{t5}
	Let $C$ be a weakly compact convex subset of a Banach space $X$, and let $\left\{{{x}_{n}} \right\}$ be a bounded sequence in $C$, regular with respect to $C$, then
	$$
 r_C\left(A\left(C,\left\{{{x}_{n}} \right\}\right)\right) \leq 
		 \frac{\omega(X)\bigg([2^{p-3}(\lambda^p+\mu^p)C^p_{- \infty }(\lambda,\mu, X)(\omega(X)^p+1)]^{\frac1p}+(\lambda-\mu)\bigg)}{\omega(X)^2+1} r\left(C,\left\{{{x}_{n}} \right\}\right).
	$$
\end{Theorem}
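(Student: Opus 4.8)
The plan is to exploit Lemma~\ref{l10} together with the defining inequality of $C^p_{-\infty}(\lambda,\mu,X)$, applied to a carefully chosen pair of vectors built from $x_M$, a center $a\in A(C,\{x_n\})$, and an arbitrary point $x\in C$. Write $r=r(C,\{x_n\})$ and $\omega=\omega(X)$ for brevity, and fix $\varepsilon>0$. First I would pick $a\in A(C,\{x_n\})$ and $x\in C$; the goal is to bound $\|a-x\|$ from above in terms of $r$, since $r_C(A(C,\{x_n\}))=\inf_{x\in C}\sup_{a\in A(C,\{x_n\})}\|a-x\|$, so an estimate of the form $\|a-x\|\le \Theta\, r$ valid for a suitable $x$ (or for all $a$) yields the theorem.

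The key step is to feed the right vectors into the definition. Using the equivalent form from Proposition~\ref{p3} or the $S_X\times B_X$ form, one applies $C^p_{-\infty}(\lambda,\mu,X)$ to normalized versions of $u$ and $v$ where $u$ and $v$ are chosen so that $\lambda u+\mu v$ and $\mu u-\lambda v$ reproduce (up to scalars) the combinations $x_M-a$, $x_M-2x+a$, and the shifted points appearing in Lemma~\ref{l10}(iii)--(iv). Concretely I expect to set something like $u = x_M - x$ and $v = a - x$ (or the reverse), so that $\mu u - \lambda v$ and $\lambda u + \mu v$ become, after rearrangement, combinations like $(\omega^2-1)(x_M-x)-(\omega^2+1)(a-x)$ controlled below by Lemma~\ref{l10}(iv) and like $\mu(x_M-a) + (\mu-\lambda)(\cdots)$ controlled above by (i) and (ii) via the triangle inequality. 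Then the definition gives
$$\min\{\|\lambda u+\mu v\|^p,\|\mu u-\lambda v\|^p\}\le 2^{p-3}(\lambda^p+\mu^p)C^p_{-\infty}(\lambda,\mu,X)\,(\|u\|^p+\|v\|^p),$$
and substituting the upper bounds $\|u\|,\|v\|\lesssim \omega r$ or $r$ from Lemma~\ref{l10}(i)--(ii), together with the lower bound $\|\mu u-\lambda v\|\gtrsim (\omega^2+1)\|a-x\|\frac{r-\varepsilon}{r}$-type estimate from (iv) and a triangle-inequality correction producing the additive $(\lambda-\mu)$ term, one solves for $\|a-x\|$. Letting $\varepsilon\to 0$ and taking the appropriate inf/sup over $x\in C$ and $a\in A(C,\{x_n\})$ should collapse to the displayed bound with the factor $\frac{\omega(X)}{\omega(X)^2+1}$ out front and the bracketed $p$-th-root expression inside.

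The main obstacle I anticipate is the bookkeeping that turns Lemma~\ref{l10}'s four estimates into exactly the two quantities $\|\lambda u+\mu v\|$ and $\|\mu u-\lambda v\|$ with the correct scalars: one must choose the substitution so that the ``min'' in the definition of $C^p_{-\infty}$ is resolved in the favorable direction (i.e.\ the term we can bound below is the one achieving the minimum, or we bound both), and so that the coefficients $\lambda$, $\mu$, $\omega^2\pm1$ line up to produce precisely $[2^{p-3}(\lambda^p+\mu^p)C^p_{-\infty}(\lambda,\mu,X)(\omega^p+1)]^{1/p}+(\lambda-\mu)$ after taking $p$-th roots. Handling the $\varepsilon$ terms uniformly and justifying the passage to the limit (using regularity of $\{x_n\}$ and weak compactness so that $A(C,\{x_n\})$ is nonempty and the relevant sup/inf are attained or approximated) is routine but must be done carefully; the asymmetry $(\lambda-\mu)$ versus $|\lambda-\mu|$ in the statement suggests a WLOG normalization $\lambda\ge\mu$ will be used.
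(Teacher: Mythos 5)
Your overall strategy is the right one and matches the paper's: apply Lemma~\ref{l10}, feed a suitable pair of vectors into the defining inequality of $C^p_{-\infty}(\lambda,\mu,X)$, lower-bound both terms inside the $\min$, and solve for $\|a-x\|$. However, the one concrete choice you commit to is the step that fails. With $u=x_M-x$ and $v=a-x$, the combinations $\lambda u+\mu v$ and $\mu u-\lambda v$ carry the coefficients $\lambda,\mu$ on $(x_M-x)$ and $(a-x)$, whereas Lemma~\ref{l10}(iii)--(iv) control the specific combinations $(x_M-x)-\frac{\omega^2-1}{\omega^2+1}(a-x)$ and $(\omega^2-1)(x_M-x)-(\omega^2+1)(a-x)$. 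Since $\lambda$ and $\mu$ are arbitrary fixed positive parameters, you cannot make $\mu/\lambda$ equal to $\frac{\omega^2-1}{\omega^2+1}$ (and the sign of the coefficient on $(a-x)$ in $\lambda u+\mu v$ is even wrong), so your substitution does not connect to the lemma at all.

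The missing idea is to build the $\omega^2$-weights into the auxiliary vectors themselves: the paper takes $u=\omega^2(x_M-a)$ and $v=x_M-2x+a$, so that $u+v=(\omega^2+1)\left(x_M-\left(\frac{2}{\omega^2+1}x+\frac{\omega^2-1}{\omega^2+1}a\right)\right)$ and $u-v=(\omega^2-1)(x_M-x)-(\omega^2+1)(a-x)$ are \emph{exactly} the quantities bounded below in Lemma~\ref{l10}(iii) and (iv), while (i) and (ii) give $\|u\|\le\omega^2(r+\varepsilon)$ and $\|v\|\le\omega(r+\varepsilon)$. The skewness is then absorbed by the triangle-inequality estimates $\|\lambda u+\mu v\|\ge\lambda\|u+v\|-|\lambda-\mu|\,\|v\|$ and $\|\mu u-\lambda v\|\ge\mu\|u-v\|-|\lambda-\mu|\,\|v\|$, which is where the additive $|\lambda-\mu|$ term you anticipated actually enters. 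Beyond this, your text is a plan rather than a proof: none of the estimates are carried out, and the passage from the resulting bound on $\|a-x\|$ to the stated constant is asserted, not derived. As written, the proposal would not yield the theorem.
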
 
\begin{proof}
Let $a\in C$, from now on, to make the proof easier, we denote $r=r\left(C,\left\{{{x}_{n}} \right\}\right)$, $A=A\left(C,\left\{{{x}_{n}} \right\}\right)$ and $\omega=\omega(X)$. Put $u=\omega^2\left(x_M -a\right)$ and $v=x_M -2 x+a$, by the above Lemma \ref{l10}, we have $\|u\| \leq \omega^2(r+\varepsilon),\|v\| \leq \omega(r+\varepsilon)$. Then we get$$\begin{aligned}
		\| \lambda u+\mu v \| 
		&\geq\lambda\|u+v\|-|\lambda-\mu|\|v\|
		\\&\geq\lambda\left\|\omega^2\left(\left(x_M -x\right)-(a-x)\right)+\left(x_M -x\right)+(a-x)\right\|-|\lambda-\mu|\omega(r+\varepsilon) \\&=\lambda\left(\omega^2+1\right)\left\|\left(x_M -x\right)-\frac{\omega^2-1}{\omega^2+1}(a-x)\right\|-|\lambda-\mu|\omega(r+\varepsilon) \\
		& =\lambda\left(\omega^2+1\right)\left\|x_M -\left(\frac{2}{\omega^2+1} x+\frac{\omega^2-1}{\omega^2+1} a\right)\right\|-|\lambda-\mu|\omega(r+\varepsilon) \\
		& \geq\lambda\left(\omega^2+1\right)(r-\varepsilon)-|\lambda-\mu|\omega(r+\varepsilon),
	\end{aligned}$$
and	$$\begin{aligned}
		\|\mu u-\lambda v\| &\geq\mu\|u-v\|-|\lambda-\mu|\|v\| \\&\geq\mu\left\|\omega^2\left(\left(x_M -x\right)-(a-x)\right)-\left(x_M -x\right)-(a-x)\right\|- |\lambda-\mu|\omega(r+\varepsilon)\\
		& =\mu\left\|\left(\omega^2-1\right)\left(x_M -x\right)-\left(\omega^2+1\right)(a-x)\right\|-|\lambda-\mu|\omega(r+\varepsilon) \\
		& \geq\mu\left(\omega^2+1\right)\|a-x\|\left(\frac{r-\varepsilon}{r}\right)-|\lambda-\mu|\omega(r+\varepsilon).
	\end{aligned}$$Thus we obtain that$$\begin{aligned}
	C^p_{- \infty }(\lambda,\mu, X)&\geq\frac{\min\{\| \lambda u + \mu v \|^p , \| \mu u - \lambda v \|^p\}}{2^{p-3} (\lambda^p + \mu^p) (\| u \|^p + \| v \|^p)}\\&\geq\frac{\min\{[\lambda\left(\omega^2+1\right)(r-\varepsilon)-|\lambda-\mu|\omega(r+\varepsilon)]^p,[\mu\left(\omega^2+1\right)\|a-x\|\left(\frac{r-\varepsilon}{r}\right)-|\lambda-\mu|\omega(r+\varepsilon)]^p\}}{2^{p-3}(\lambda^p+\mu^p)\omega^p(\omega^p+1)(r+\varepsilon)^p}.
	\end{aligned} $$
		Then since $\|a-x\|\leq r$ and let $\varepsilon\to 0$, we obtain that $$C^p_{- \infty }(\lambda,\mu, X)\geq\frac{[\min\{\lambda,\mu\}\left(\omega^2+1\right)\|a-x\|-|\lambda-\mu|\omega r]^p}{2^{p-3}(\lambda^p+\mu^p)\omega^p(\omega^p+1)r^p},$$
		which implies that $$\|a-x\|\leq\frac{\omega\bigg([2^{p-3}(\lambda^p+\mu^p)C^p_{- \infty }(\lambda,\mu, X)(\omega^p+1)]^{\frac1p}+|\lambda-\mu|\bigg)r}{\min\{\lambda,\mu\}(\omega^2+1)}.$$  Hence $$r_C(A)\leq\frac{\omega\bigg([2^{p-3}(\lambda^p+\mu^p)C^p_{- \infty }(\lambda,\mu, X)(\omega^p+1)]^{\frac1p}+|\lambda-\mu|\bigg)}{\min\{\lambda,\mu\}(\omega^2+1)}r$$
		 holds for arbitrary $a\in C $.
\end{proof}
Next we will use the following two lemmas to obtain two corollaries based on this theorem.
\begin{Lemma}\label{l4}
	Let $X$ be a Banach space satisfying the
	(DL)-condition, then $X$ has weak normal structure.
\end{Lemma}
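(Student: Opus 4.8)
\quad The plan is to argue by contradiction, using the classical Brodskii--Milman description of the failure of weak normal structure in terms of ``diametral sequences'' and then contradicting the fact that the constant $\kappa$ in the (DL)-condition is strictly less than $1$.

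Suppose $X$ does not have weak normal structure. By the Brodskii--Milman theorem \cite{11}, there is then a weakly compact convex set $K\subseteq X$ and a bounded sequence $\{x_n\}\subseteq K$ with $d:=\operatorname{diam}\{x_n:n\in\mathbb N\}>0$ and $\operatorname{dist}\bigl(x_{n+1},\operatorname{conv}\{x_1,\dots,x_n\}\bigr)\to d$. Passing to $C:=\overline{\operatorname{conv}}\{x_n:n\in\mathbb N\}$, which is again weakly compact and convex with $\operatorname{diam}(C)=d$, I would work with $C$ from now on.

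The next step is to unwind the definitions of asymptotic radius and center for this sequence. For every finite convex combination, say $x\in\operatorname{conv}\{x_1,\dots,x_m\}$, one has $\|x_n-x\|\ge\operatorname{dist}(x_n,\operatorname{conv}\{x_1,\dots,x_{n-1}\})$ whenever $n>m$, hence $\limsup_n\|x_n-x\|\ge d$; since $x\mapsto\limsup_n\|x_n-x\|$ is $1$-Lipschitz, these combinations are dense in $C$, and $\|x_n-x\|\le d$ always, I obtain $\limsup_n\|x_n-x\|=d$ for every $x\in C$. Running the same computation along any subsequence shows that $\{x_n\}$ is regular with respect to $C$, and it also gives $r(C,\{x_n\})=\inf_{x\in C}\limsup_n\|x_n-x\|=d$ together with $A(C,\{x_n\})=C$. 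Finally, for every $x\in C$ we have $d=\limsup_n\|x_n-x\|\le\sup_{y\in C}\|x-y\|\le\operatorname{diam}(C)=d$, so $r_C(C)=\inf_{x\in C}\sup_{y\in C}\|x-y\|=d$.

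It then remains only to apply the (DL)-condition to the weakly compact convex set $C$ and the regular sequence $\{x_n\}$: it yields $r_C(A(C,\{x_n\}))\le\kappa\,r(C,\{x_n\})$, i.e. $d=r_C(C)\le\kappa\,d$, which forces $\kappa\ge 1$ and contradicts $\kappa\in[0,1)$ (recall $d>0$). Hence $X$ has weak normal structure. I expect the only genuine input, and thus the main obstacle, to be the Brodskii--Milman step, namely the construction of a diametral sequence inside a weakly compact convex set that lacks normal structure; once such a sequence is available, everything downstream is a routine translation between the diametral property, the asymptotic quantities $r(C,\{x_n\})$, $A(C,\{x_n\})$, $r_C$, and the statement of the (DL)-condition.
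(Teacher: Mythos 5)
Your argument is correct. The paper states this lemma without proof (it is quoted as a known result from the (DL)-condition literature), and your reconstruction is the standard one: take a Brodskii--Milman diametral sequence $\{x_n\}$ in a weakly compact convex set failing normal structure, pass to $C=\overline{\operatorname{conv}}\{x_n\}$, verify that $\lim_n\|x_n-x\|=\operatorname{diam}(C)=d$ for every $x\in C$ (whence $\{x_n\}$ is regular, $r(C,\{x_n\})=d$, $A(C,\{x_n\})=C$ and $r_C(C)=d$), and then contradict $r_C(A(C,\{x_n\}))\le\kappa\,r(C,\{x_n\})$ with $\kappa<1$. The only external input is indeed the Brodskii--Milman existence of a diametral sequence, which you correctly identify and cite; all the remaining steps check out as written.
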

\begin{Lemma}\label{l5}
	Let $C$ be a nonempty weakly compact convex subset of a Banach space $X$ which satisfies the (DL)-condition, and let $T: C \rightarrow C$ be a multivalued nonexpansive mapping, then $T$ has a fixed point.
\end{Lemma}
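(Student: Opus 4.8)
The statement is a standard fixed point theorem for multivalued nonexpansive mappings under the (DL)-condition, so I would reconstruct its proof as follows; here ``$T$ multivalued nonexpansive'' means $T$ assigns to each $x\in C$ a nonempty norm-compact convex set $Tx\subseteq C$ with $H(Tx,Ty)\le\|x-y\|$, $H$ the Hausdorff metric. The first move is to manufacture an approximate fixed point sequence: fixing $u\in C$ and scalars $t_n\uparrow1$, the assignment $S_n(x)=t_n\,Tx+(1-t_n)\,u$ has nonempty norm-compact convex values in $C$ and satisfies $H(S_nx,S_ny)\le t_n\|x-y\|$, so it is a multivalued $t_n$-contraction on the complete set $C$; Nadler's contraction principle yields $x_n\in S_n(x_n)$, whence $\operatorname{dist}(x_n,Tx_n)\le(1-t_n)\operatorname{diam}(C)\to0$. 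Passing to a subsequence by the usual diagonal argument, I may take $\{x_n\}$ regular with respect to $C$; write $r=r(C,\{x_n\})$ and $A=A(C,\{x_n\})$, which is nonempty, weakly compact and convex since $C$ is.

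If $r=0$, then $x_n\to p$, the unique point of $A$, and since $x\mapsto\operatorname{dist}(x,Tx)$ is (Lipschitz) continuous by nonexpansiveness of $T$ while $\operatorname{dist}(x_n,Tx_n)\to0$, we get $\operatorname{dist}(p,Tp)=0$; as $Tp$ is closed, $p\in Tp$ is a fixed point. If instead $r>0$, the (DL)-condition enters, supplying $\kappa\in[0,1)$ with $r_C(A)\le\kappa r$, and the plan is to iterate: repeat the whole construction with $C$ replaced by a suitable smaller weakly compact convex set built from $A$, on which the asymptotic radius has been contracted by the factor $\kappa$. What makes this possible is the computation that for $x\in A$ the inequality $\operatorname{dist}(x_n,Tx)\le\operatorname{dist}(x_n,Tx_n)+H(Tx_n,Tx)\le\operatorname{dist}(x_n,Tx_n)+\|x_n-x\|$, together with compactness of $Tx$ and regularity of $\{x_n\}$, produces (after a further subsequence) a point of $Tx$ whose $\limsup$-distance to the sequence is at most $r$, so that the new set again carries an approximate fixed point sequence. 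Iterating gives a nested chain $C\supseteq A_1\supseteq A_2\supseteq\cdots$ with asymptotic radii $r_k\le\kappa^k r$, hence $\operatorname{diam}(A_k)\to0$, and the unique point of $\bigcap_k A_k$ (nonempty by weak compactness) is then a fixed point by the $r=0$ argument. One could instead route this through Zorn's lemma, taking a minimal $T$-invariant weakly compact convex subset of $C$ and using the same asymptotic-center estimate with the (DL)-condition to force that set to be a singleton.

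The step I expect to be the main obstacle is exactly this transfer: proving rigorously that the asymptotic center $A$ (and then each $A_k$) again supports an approximate fixed point sequence for $T$, so that the (DL)-condition may be legitimately reapplied. The delicacy is that the point of $Tx$ produced above lies, a priori, in $A(C,\{x_{n_j}\})$ for a subsequence rather than in $A(C,\{x_n\})$ itself, so one must carefully arrange the successive passages to subsequences---keeping every sequence regular throughout---and control how the asymptotic center of a sequence compares with that of its subsequences. This bookkeeping, not any single estimate, is where the real work of the argument lies.
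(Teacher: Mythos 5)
The paper never proves this statement: both Lemma \ref{l4} and Lemma \ref{l5} are stated without proof, imported from the literature (this is the fixed point theorem for the (DL)-condition due to Dom\'inguez Benavides and Lorenzo Ram\'irez, later used by Dhompongsa et al.\ and by Zuo in reference [15]), so there is no in-paper argument to compare yours against. Judged on its own terms, your outline follows the correct and standard route: Nadler approximants to produce an approximate fixed point sequence, passage to a regular (and, implicitly, asymptotically uniform) subsequence, the estimate $\operatorname{dist}(x_n,Tx)\le\operatorname{dist}(x_n,Tx_n)+H(Tx_n,Tx)\le\operatorname{dist}(x_n,Tx_n)+\|x_n-x\|$ combined with compactness of $Tx$ to show $Tx\cap A\neq\emptyset$ for each $x\in A(C,\{x_n\})$, and then iteration of the (DL)-inequality.

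Nevertheless, the proof as written has a genuine gap, and you name it yourself: you never establish that $A=A(C,\{x_n\})$ actually carries an approximate fixed point sequence, which is the step that allows (DL) to be reapplied. Knowing $Tx\cap A\neq\emptyset$ for every $x\in A$ does not by itself produce an a.f.p.s.\ inside $A$, because $x\mapsto Tx\cap A$ need not be nonexpansive, so Nadler's theorem cannot simply be rerun on $A$; in the literature this requires a separate lemma in which the contractive approximants are built with values related to $Tx\cap A$ and the asymptotic uniformity of $\{x_n\}$ is used to keep the relevant limit points inside $A$. A second, quieter defect is the decay claim \enquote{$r_k\le\kappa^k r$, hence $\operatorname{diam}(A_k)\to0$}: for a new sequence $\{x_n^k\}\subset A_k$ the bound available from inclusion alone is only $r(A_k,\{x_n^k\})\le\operatorname{diam}(A_k)\le 2\kappa\, r_{k-1}$, which is useless when $\kappa\ge\tfrac12$; to obtain the factor $\kappa$ rather than $2\kappa$ one must construct $\{x_n^k\}$ so that $r(A_k,\{x_n^k\})\le r_{A_{k-1}}(A_k)$, and this additional property is precisely what the missing lemma has to supply. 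So the skeleton is the right one, but the two load-bearing steps are asserted rather than proved.
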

\begin{Corollary}
Let $C$ be a nonempty bounded closed convex subset of a Banach space $X$ such that $$C^p_{- \infty }(\lambda,\mu, X)<\frac{[\min\{\lambda,
	\mu\}\left(\omega(X)^2+1\right)-|\lambda-\mu|\omega(X) ]^p}{2^{p-3}(\lambda^p+\mu^p)\omega(X)^p(\omega(X)^p+1)},$$ and let $T: C \rightarrow K C(C)$ be a multivalued nonexpansive mapping, then $T$ has a fixed point.
\end{Corollary}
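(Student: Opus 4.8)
The plan is to observe that the numerical hypothesis on $C^p_{-\infty}(\lambda,\mu,X)$ is exactly the statement that the contraction coefficient produced in the proof of Theorem \ref{t5} is strictly below $1$, and then to invoke the (DL)-condition together with the fixed point result of Lemma \ref{l5}. Writing $\omega=\omega(X)$, the first step is to clear the positive denominator in
$$C^p_{-\infty}(\lambda,\mu,X)<\frac{[\min\{\lambda,\mu\}(\omega^2+1)-|\lambda-\mu|\omega]^p}{2^{p-3}(\lambda^p+\mu^p)\omega^p(\omega^p+1)}$$
and take $p$-th roots, which rewrites it as
$$\frac{\omega\Big([2^{p-3}(\lambda^p+\mu^p)C^p_{-\infty}(\lambda,\mu,X)(\omega^p+1)]^{1/p}+|\lambda-\mu|\Big)}{\min\{\lambda,\mu\}(\omega^2+1)}<1.$$
The left-hand side is precisely the factor multiplying $r(C,\{x_n\})$ in the final inequality of the proof of Theorem \ref{t5}, and it depends neither on $C$ nor on the chosen sequence. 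Denoting it by $\kappa\in[0,1)$, Theorem \ref{t5} then gives $r_C(A(C,\{x_n\}))\le\kappa\,r(C,\{x_n\})$ for every weakly compact convex $C\subseteq X$ and every bounded sequence in $C$ that is regular with respect to $C$; in other words, $X$ satisfies the (DL)-condition.

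It remains to pass from a merely bounded closed convex $C$ to a weakly compact one. Here I would note that the hypothesis also forces $C^p_{-\infty}(\lambda,\mu,X)<\frac{(\lambda+\mu)^p}{2^{p-2}(\lambda^p+\mu^p)}$: comparing the two upper bounds reduces, after cancelling the common factor $2^{p-3}(\lambda^p+\mu^p)$, to the elementary estimate $2[\min\{\lambda,\mu\}(\omega^2+1)-|\lambda-\mu|\omega]^p\le(\lambda+\mu)^p\omega^p(\omega^p+1)$, which follows from $\omega\ge1$ and the convexity inequality $(a+b)^p\le 2^{p-1}(a^p+b^p)$ of Lemma \ref{l1}. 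By Corollary \ref{c1} the space $X$ is then uniformly non-square, hence reflexive, so the bounded closed convex set $C$ is weakly compact. Applying Lemma \ref{l5} to the multivalued nonexpansive mapping $T:C\to KC(C)$ on the weakly compact convex set $C$ in the space $X$, which satisfies the (DL)-condition, produces a fixed point of $T$.

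The genuinely delicate point is this last reduction: verifying that the numerical hypothesis is strong enough to imply uniform non-squareness, and hence reflexivity, so that Lemma \ref{l5} — stated for weakly compact convex sets — can legitimately be applied to an arbitrary bounded closed convex $C$. The algebraic rearrangement of the hypothesis and the appeals to Theorem \ref{t5} and Lemma \ref{l5} are otherwise routine.
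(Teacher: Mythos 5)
Your argument is correct and follows the same basic route as the paper: the numerical hypothesis is exactly the statement that the coefficient appearing at the end of the proof of Theorem \ref{t5} is strictly less than $1$, so $X$ satisfies the (DL)-condition, and Lemma \ref{l5} then yields the fixed point. The paper's own proof consists of precisely these two sentences and nothing more; in particular it applies Lemma \ref{l5} --- which is stated for a \emph{weakly compact} convex set --- directly to the merely bounded closed convex set $C$ of the corollary, without comment. You correctly identify this as the delicate point and close the gap: the hypothesis forces $C^p_{-\infty}(\lambda,\mu,X)<\frac{(\lambda+\mu)^p}{2^{p-2}(\lambda^p+\mu^p)}$ (a comparison of bounds that the paper itself asserts, without proof, only in the subsequent corollary on normal structure), so by Corollary \ref{c1} the space is uniformly non-square, hence reflexive, and the bounded closed convex set $C$ is weakly compact. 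Your sketch of that comparison via $\omega(X)\ge 1$ and the convexity inequality of Lemma \ref{l1} checks out (one also uses $\min\{\lambda,\mu\}\le(\lambda+\mu)/2$ to handle the case $\lambda\ne\mu$). In short, your proof is the paper's proof made complete.
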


\begin{proof}
	From the condition
$$C^p_{- \infty }(\lambda,\mu, X)<\frac{[\min\{\lambda,
	\mu\}\left(\omega(X)^2+1\right)-|\lambda-\mu|\omega(X) ]^p}{2^{p-3}(\lambda^p+\mu^p)\omega(X)^p(\omega(X)^p+1)},$$
	then $X$ satisfies the (DL)-condition by Theorem \ref{t5}, therefore $T$ has a fixed point by Lemma \ref{l5}.
\end{proof} 
\begin{Corollary}
	 Let $X$ be a Banach space such that $$C^p_{- \infty }(\lambda,\mu, X)<\frac{[\min\{\lambda,
	 	\mu\}\left(\omega(X)^2+1\right)-|\lambda-\mu|\omega(X) ]^p}{2^{p-3}(\lambda^p+\mu^p)\omega(X)^p(\omega(X)^p+1)},$$ then $X$ has normal structure.
\end{Corollary}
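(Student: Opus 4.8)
The plan is to recognize the hypothesis as exactly the condition that the contraction coefficient furnished by Theorem~\ref{t5} is strictly less than $1$, thereby obtaining the (DL)-condition, then to pass to weak normal structure via Lemma~\ref{l4}, and finally to upgrade to full normal structure by extracting reflexivity from the same hypothesis.

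First I would write $\omega=\omega(X)$ and put
$$\kappa=\frac{\omega\big(\big[\,2^{p-3}(\lambda^p+\mu^p)C^p_{-\infty}(\lambda,\mu,X)(\omega^p+1)\,\big]^{1/p}+|\lambda-\mu|\big)}{\min\{\lambda,\mu\}(\omega^2+1)},$$
which is the coefficient in the estimate $r_C(A(C,\{x_n\}))\le\kappa\, r(C,\{x_n\})$ appearing in Theorem~\ref{t5}. Isolating $C^p_{-\infty}(\lambda,\mu,X)$ and raising to the $p$-th power shows that $\kappa<1$ is equivalent to the stated hypothesis (which in particular forces the right-hand side of the hypothesis to be positive; otherwise the hypothesis is vacuous). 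Hence, under the hypothesis, $r_C(A(C,\{x_n\}))\le\kappa\, r(C,\{x_n\})$ with $\kappa<1$ for every weakly compact convex $C\subseteq X$ and every bounded sequence $\{x_n\}$ in $C$ that is regular with respect to $C$; that is, $X$ satisfies the (DL)-condition, so by Lemma~\ref{l4} it has weak normal structure.

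To upgrade this to normal structure I would first establish the elementary inequality
$$\frac{[\,\min\{\lambda,\mu\}(\omega^2+1)-|\lambda-\mu|\omega\,]^p}{2^{p-3}(\lambda^p+\mu^p)\,\omega^p(\omega^p+1)}\ \le\ \frac{(\lambda+\mu)^p}{2^{p-2}(\lambda^p+\mu^p)}$$
for all $\lambda,\mu>0$, all $1\le p<+\infty$ and all $\omega\ge1$. Cancelling $\lambda^p+\mu^p$ and using the convexity of $t\mapsto t^p$ in the form $\tfrac{\omega^p+1}{2}\ge\big(\tfrac{\omega+1}{2}\big)^p$, it suffices to check the linear inequality $\min\{\lambda,\mu\}(\omega^2+1)-|\lambda-\mu|\omega\le\tfrac12(\lambda+\mu)\,\omega(\omega+1)$, whose two sides differ by $\min\{\lambda,\mu\}(\omega-1)+\tfrac12|\lambda-\mu|\,\omega(\omega+3)\ge0$. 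Combining this with the hypothesis gives $C^p_{-\infty}(\lambda,\mu,X)<\tfrac{(\lambda+\mu)^p}{2^{p-2}(\lambda^p+\mu^p)}$, so Corollary~\ref{c1} shows $X$ is uniformly non-square, hence super-reflexive and in particular reflexive. Since in a reflexive space every bounded closed convex set is weakly compact, weak normal structure coincides with normal structure, and therefore $X$ has normal structure.

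The parts that are purely routine are the algebraic rearrangement identifying the hypothesis with $\kappa<1$ (one only has to keep track of the factor $\min\{\lambda,\mu\}$ exactly as it enters the proof of Theorem~\ref{t5}) and the two-variable inequality above, which the convexity trick reduces to a manifestly nonnegative expression. The step I expect to require the most care is the final one: Lemma~\ref{l4} yields only \emph{weak} normal structure, so the reflexivity obtained from Corollary~\ref{c1} is genuinely needed, and one should record explicitly that weak normal structure and normal structure coincide in reflexive spaces.
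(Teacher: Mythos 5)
Your proof is correct and follows essentially the same route as the paper: Theorem~\ref{t5} plus Lemma~\ref{l4} give weak normal structure, and the comparison with $\frac{(\lambda+\mu)^p}{2^{p-2}(\lambda^p+\mu^p)}$ combined with Corollary~\ref{c1} gives uniform non-squareness, hence reflexivity, hence full normal structure. The only difference is that you actually verify (via the convexity estimate $\frac{\omega^p+1}{2}\ge(\frac{\omega+1}{2})^p$ and the nonnegativity of $\min\{\lambda,\mu\}(\omega-1)+\tfrac12|\lambda-\mu|\omega(\omega+3)$) the intermediate inequality that the paper merely asserts, which is a welcome addition.
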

\begin{proof}
First, using Theorem \ref{t5} and Lemma \ref{l4}, we can easily show that $X$ has weak normal structure.

Second, we have the inequality
\[
C^p_{- \infty }(\lambda,\mu, X) < \frac{[\min\{\lambda, \mu\}(\omega(X)^2 + 1) - |\lambda - \mu|\omega(X)]^p}{2^{p-3}(\lambda^p + \mu^p)\omega(X)^p(\omega(X)^p + 1)} < \frac{(\lambda + \mu)^p}{2^{p-2}(\lambda^p + \mu^p)},
\]
where $1 \leq \omega(X) \leq 3$. This implies that $X$ is uniformly non-square by Corollary \ref{c1}, and thus $X$ is reflexive. Consequently, weak normal structure is equivalent to normal structure in $X$.
\end{proof}
\mbox{}
\\[8pt]
{\bf Acknowledgements}\ \ We thank the referees for their time and comments.
\\[8pt]
{\bf Conflict of Interest}\ \ The authors declare no conflict of interest.
{\footnotesize

}

\end{document}